\let\pa\partial
\let\na\nabla
\let\eps\varepsilon
\newcommand{\N}{{\mathbb N}}
\newcommand{\R}{{\mathbb R}}
\newcommand{\diver}{\operatorname{div}}
\renewcommand{\H}{\mathcal{H}}
\newtheorem{theorem}{Theorem}
\newtheorem{lemma}[theorem]{Lemma}
\newtheorem{proposition}[theorem]{Proposition}
\newtheorem{remark}[theorem]{Remark}
\begin{document}

\title[Shigesada--Kawasaki--Teramoto system beyond detailed balance]{
The Shigesada--Kawasaki--Teramoto \\
cross-diffusion system beyond detailed balance}

\author[X. Chen]{Xiuqing Chen}
\address{School of Mathematics (Zuhai), Sun Yat-Sen University, Zhuhai 519082,
Guang\-dong Province, China}
\email{chenxiuqing@mail.sysu.edu.cn}

\author[A. J\"ungel]{Ansgar J\"ungel}
\address{Institute of Analysis and Scientific Computing, Technische Universit\"at Wien,
Wiedner Hauptstra\ss e 8--10, 1040 Wien, Austria}
\email{juengel@tuwien.ac.at}

\author[L. Wang]{Lei Wang}
\address{School of Mathematics (Zuhai), Sun Yat-Sen University, Zhuhai 519082,
Guang\-dong Province, China}
\email{wanglei33@mail2.sysu.edu.cn}

\date{\today}

\thanks{The first and third authors acknowledge support from the National Natural 
Science Foundation of China (NSFC), grant 11971072.
The second author acknowledges partial support from
the Austrian Science Fund (FWF), grants P33010, W1245, and F65.
This work has received funding from the European
Research Council (ERC) under the European Union's Horizon 2020 research and
innovation programme, ERC Advanced Grant no.~101018153.}

\begin{abstract}
The existence of global weak solutions to the cross-diffusion model of
Shigesada, Kawasaki, and Teramoto for an arbitrary number of species
is proved. The model consists of strongly coupled parabolic equations for the
population densities in a bounded domain with no-flux boundary conditions,
and it describes the dynamics of the segregation of the population species.
The diffusion matrix is neither symmetric nor positive semidefinite.
A new logarithmic entropy allows for
an improved condition on the coefficients of heavily nonsymmetric diffusion
matrices, without imposing the detailed-balance condition
that is often assumed in the literature.
Furthermore, the large-time convergence of the solutions
to the constant steady state is proved by using the relative entropy associated
to the logarithmic entropy.
\end{abstract}

% \paragraph{Keywords:}
\keywords{Cross-diffusion, population dynamics, entropy method,
global existence, weak solutions, large-time behavior of solutions, relative entropy.}

% \paragraph{AMS classification:}
\subjclass[2000]{35K40, 35K51, 35K55, 35Q92, 92D25.}

\maketitle

%%%%%%%%%%%%%%%%%%%%%%%%%%%%%%%%%%%%%%%%%%%%%%%%%%%%%%%%%%%%%%%%%%%%%%%%%%%%%%%

\section{Introduction}

The Shigesada--Kawasaki--Teramoto (SKT) system was introduced in \cite{SKT79}
to describe the dynamics of two competing population species. In this model,
the diffusion rate of each species depends on the gradients of the densities of
both species, expressed by cross-diffusion terms. They give rise to a repulsive
effect leading to spatial segregation. The stationary model has been extended
to three species in \cite{LMN00}, while the time-dependent system for an
arbitrary number of species was investigated in \cite{WeFu09}.
The existence of global weak solutions to the transient model has been proved
only under detailed balance or imposing bounds
on the self-diffusion coefficients \cite{CDJ18}; see below for details.
In this paper, we suggest a new condition on the self-diffusion coefficients,
which is significantly weaker than that one in \cite{CDJ18} in the case
of heavily nonsymmetric diffusion matrices.

The SKT model consists of the following cross-diffusion equations for the
population densities $u_i$:
\begin{equation}\label{1.eq}
  \pa_t u_i = \diver\bigg(\sum_{j=1}^n A_{ij}(u)\na u_j\bigg), \quad
	A_{ij}(u) = \delta_{ij}a_{i0} + \delta_{ij}\sum_{k=1}^n a_{ik}u_k + a_{ij}u_i,
\end{equation}
in a bounded domain $\Omega\subset\R^d$ ($d\le 3$) for $t>0$, where $i,j=1,\ldots,n$,
and $\delta_{ij}$ is the Kronecker symbol,
supplemented by the initial and no-flux boundary conditions
\begin{equation}\label{1.bic}
  u_i(0)=u_i^0\quad\mbox{in }\Omega, \quad\sum_{j=1}^n A_{ij}(u)\na u_j\cdot\nu=0
	\quad\mbox{on }\pa\Omega,\ t>0,\ i=1,\ldots,n.
\end{equation}
The diffusion coefficients $a_{ij}$ are nonnegative numbers.
We call $a_{ii}$ the self-diffusion coefficients and $a_{ij}$ for $i\neq j$
the cross-diffusion coefficients. The original model for $n=2$ species in \cite{SKT79}
also contains a drift term involving the environmental potential and
Lotka--Volterra reaction terms.
We have neglected these terms to simplify the presentation.
Our technique is able to treat these terms; see, e.g., \cite{ChJu04,ChJu06}.

While the global existence analysis in the two-species model
is quite well understood \cite{ChJu04,ChJu06,Dre08},
the global existence of weak solutions to the $n$-species system
has been proven only in the so-called detailed-balance case (see below)
\cite{CDJ18} and in the case of large self-diffusion coefficients; see, e.g.,
\cite{DLM14,DLMT15,LeMo17}. Another approach was suggested by Amann \cite{Ama89},
who proved that a priori estimates in the $W^{1,p}(\Omega)$ norm with $p>d$ are
sufficient for the solutions to general quasilinear parabolic
systems to exist globally in time, and he applied his result
to the triangular case, where $a_{ij}=0$ for $i>j$.
However, $W^{1,p}(\Omega)$ estimates with $p>d$ for
solutions to \eqref{1.eq} under general conditions seem to be out of reach.

The main difficulty in the analysis of \eqref{1.eq}--\eqref{1.bic} is the
fact that the diffusion matrix is generally neither symmetric nor positive
semidefinite. This issue was overcome in \cite{CDJ18} by exploiting the
entropy structure of \eqref{1.eq}. This means that there exists a so-called
entropy density $h:[0,\infty)\to\R$ such that \eqref{1.eq} can be written in
terms of the entropy variables $w_i=\pa h/\pa u_i$ as
$$
  \pa_t u_i(w) = \diver\bigg(\sum_{j=1}^n B_{ij}(w)\na w_j\bigg), \quad i=1,\ldots,n,
$$
where $u_i$ is interpreted as a function of $w$ and
$B(w)=A(u(w))h''(u(w))^{-1}$ with $B(w)=(B_{ij}(w))\in\R^{n\times n}$
is a positive semidefinite matrix, called the mobility matrix.
Here, $w=(w_1,\ldots,w_n)$ and
$u=(u_1,\ldots,u_n)$ are vector-valued functions.
For instance, we introduce the entropy density
$$
  \widetilde{h}(u)=\sum_{i=1}^n\pi_i u_i(\log u_i-1)dx,
$$
where $\pi_i>0$ for $i=1,\ldots,n$ are assumed to satisfy $\pi_i a_{ij}=\pi_j a_{ji}$
for all $i,j=1,\ldots,n$. These equations constitute the detailed-balance condition
for the Markov chain associated to $(a_{ij})$, and $(\pi_1,\ldots,\pi_n)$ is
the corresponding invariant measure. Under this assumption,
a formal computation shows that,
along solutions to \eqref{1.eq}--\eqref{1.bic},
$$
  \frac{d}{dt}\int_\Omega \widetilde{h}(u(t))dx
	+ 4\sum_{i=1}^n\pi_i a_{i0}\int_\Omega|\na\sqrt{u_i}|^2 dx
	+ 2\sum_{i=1}^n\pi_i a_{ii}\int_\Omega|\na u_i|^2 dx \le 0,
$$
which provides suitable gradient estimates. It was shown in \cite{CDJ18}
that the detailed-balance condition is not necessary for a global existence
analysis. If self-diffusion dominates cross-diffusion in the sense
\begin{equation}\label{1.wcd}
  4a_{ii} > \sum_{j=1}^n
	\big(\sqrt{a_{ij}}-\sqrt{a_{ji}}\big)^2
	\quad\mbox{for all }i=1,\ldots,n,
\end{equation}
then the global existence of weak solutions follows.
If $a_{ii}>0$, this condition is satisfied if the matrix $(a_{ij})$ is
nearly symmetric.

The goal of this paper is to prove the global existence of weak solutions
without imposing detailed balance under a condition that is weaker than
\eqref{1.wcd} for (heavily) nonsymmetric matrices $(a_{ij})$.
The key idea of our analysis is the observation that the entropy density
\begin{equation}\label{1.h}
  h(u) = \sum_{i=1}^n \pi_i(u_i-\log u_i)
\end{equation}
formally satisfies the inequality
\begin{equation}\label{1.ei}
  \frac{d}{dt}\int_\Omega h(u(t))dx
	+ \sum_{i=1}^n\pi_i a_{i0}\int_\Omega|\na\log u_i|^2 dx
	+ \sum_{i=1}^n\bigg(8\pi_i a_{ii} - \sum_{j\neq i}\pi_j a_{ji}\bigg)
	\int_\Omega|\na\sqrt{u_i}|^2 dx \le 0.
\end{equation}
(The computation is made rigorous for approximate solutions in \eqref{3.deis}
below.) Thus, we obtain a gradient estimate for $\sqrt{u_i}$ if
\begin{equation}\label{1.aii}
  \kappa := \min_{i=1,\ldots,n}\bigg(8\pi_i a_{ii}
	- \sum_{j=1,\,j\neq i}^n \pi_j a_{ji}\bigg) > 0
	\end{equation}
is satisfied (we allow for $a_{i0}\ge 0$).
%Condition \eqref{1.wcd} requires a lower bound for the
%self-diffusion coefficients by measuring the nonsymmetry of $(a_{ij})$.
If $(a_{ij})$ is almost symmetric, condition \eqref{1.wcd} outperforms
\eqref{1.aii}. However, condition \eqref{1.aii} is generally weaker
than \eqref{1.wcd} if $a_{ij}$ and $a_{ji}$ differ significantly.

We underline this statement by the following example.
Let $n=3$, $a_{13}=a_{21}=a_{32}=1$, and $a_{12}=a_{23}=a_{31}=0$.
Since $|a_{ij}-a_{ji}|=1$ for $i\neq j$, the matrix $(a_{ij})$ is
nonsymmetric.
Condition \eqref{1.wcd} from \cite{CDJ18} is equivalent to $a_{ii}>1/2$ for $i=1,2,3$,
while condition \eqref{1.aii} is equivalent to $a_{11}a_{22}a_{33}>8^{-3}$
(see Lemma \ref{lem.equi} in the Appendix).
This is significantly weaker than $a_{11}a_{22}a_{33}>8^{-1}$ (which follows
from $a_{ii}>1/2$) and, moreover,
we only need {\em one} self-diffusion coefficient to be sufficiently large.

In the literature, the functional \eqref{1.h} has been identified as an entropy
(i.e.\ a Lypunov functional) mainly for higher-order parabolic equations
via the method of systematic integration by parts \cite{JuMa06}.
A similar functional was used to prove the convergence of solutions to the
two-species SKT model to a steady state under quite particular conditions on the
coefficients $a_{ij}$ \cite{JuZa16}.
Up to our knowledge, the use of \eqref{1.h} in the global existence analysis
of cross-diffusion systems is new.

We impose the following assumptions:
\begin{itemize}
\item[(A1)] Domain: $\Omega\subset\R^d$ is a bounded domain with $\pa\Omega\in C^2$,
$d\le 3$, and $T>0$.
\item[(A2)] Initial datum: $u^0=(u_1^0,\ldots,u_n^0)\in L^1(\Omega;\R^n)$
satisfies $u_i>0$ in $\Omega$ for $i=1,\ldots,n$, $\int_\Omega h(u^0)dx<\infty$
if $d\le3$, and moreover $\int_\Omega(u_i^0)^2dx<\infty$ if $d=2,3$.
\item[(A3)] Coefficients: $a_{ij}\ge 0$, $a_{i0}\ge 0$ for all $i,j=1,\ldots,n$,
and there exist $\pi_1,\ldots,\pi_n>0$ such that \eqref{1.aii} holds.
\end{itemize}

The boundary regularity in Assumption (A1) is needed to apply an $H^2(\Omega)$
elliptic regularity result for the duality method
(see the proof of Lemma \ref{lem.est2} below). The restriction to at most three
space dimensions comes from the continuous embedding $H^1(\Omega)\hookrightarrow
L^6(\Omega)$, which is needed to conclude the weak convergence of $(u_i^\delta)^3$
in $L^2(\Omega)$, where $u_i^\delta$ are some approximate solutions;
see Section \ref{sec.approx}, step 2.
In view of the entropy inequality \eqref{1.ei}, we need the regularity
$\int_\Omega h(u^0)dx$ for the initial datum in Assumption (A2).
In two and three space dimensions,
we need more integrability to deal with the quadratic nonlinearity.
As already mentioned, Assumption (A3) is a relaxed ``self-diffusion $>$
cross-diffusion'' condition. Note that the diffusion coefficients $a_{i0}$
are allowed to vanish.

\subsection*{Notation}

A vector-valued function $v:\Omega\to\R^n$ has
the components $v_1,\ldots,v_n$. We denote the entries of a matrix
$A\in\R^{n\times n}$ by $A_{ij}$. We set $Q_T=\Omega\times(0,T)$ for the
space-time cylinder. Furthermore, we need the space of test functions
$$
  W_\nu^{2,p}(\Omega) = \big\{\phi\in W^{2,p}(\Omega):\na\phi\cdot\nu=0
	\mbox{ on }\pa\Omega\big\}, \quad p\ge 2,
$$
and we set $H^2_\nu(\Omega)=W^{2,2}_\nu(\Omega)$.

Our first main result is as follows.

\begin{theorem}[Global existence]\label{thm.ex}
Let Assumptions (A1)--(A3) hold. Then there exists a weak solution $u=(u_1,\ldots,u_n)$
to \eqref{1.eq}--\eqref{1.bic} satisfying $u_i(t)>0$ a.e.\ in $\Omega$,
$\int_\Omega h(u(t))dx<\infty$ for $0<t<T$, the regularity
\begin{align*}
  & u_i\in L^\infty(0,T;L^1(\Omega))\cap L^{3}(Q_T), \quad
	\sqrt{u_i}\in L^2(0,T;H^1(\Omega)), \\
	& \pa_t u_i \in L^{4/3}(0,T;W_\nu^{2,4}(\Omega)'),
\end{align*}
$u$ satisfies the initial conditions in the sense of $W^{2,4}_\nu(\Omega)'$, and
it holds for all $\phi\in L^4(0,T;$ $W^{2,4}_\nu(\Omega;\R^n))$ and
$i=1,\ldots,n$ that
\begin{equation}\label{1.weak}
  \int_0^T\langle \pa_t u_i,\phi_i\rangle dt = \int_0^T\int_\Omega
	u_i p_i(u)\Delta\phi_i dxdt, \quad
	p_i(u) = a_{i0} + \sum_{k=1}^n a_{ik}u_k,
\end{equation}
where $\langle\cdot,\cdot\rangle$ denotes the duality pairing of 
$W^{2,4}_\nu(\Omega)'$ and $W^{2,4}_\nu(\Omega)$.
\end{theorem}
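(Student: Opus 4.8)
The plan is to prove Theorem \ref{thm.ex} by the Galerkin-in-entropy-variables approximation scheme combined with the entropy inequality \eqref{1.ei}, following the boundedness-by-entropy framework but adapted to the new logarithmic entropy \eqref{1.h}. First I would reformulate \eqref{1.eq} in the entropy variables $w_i = \pa h/\pa u_i = \pi_i(1 - 1/u_i)$, which maps $(0,\infty)^n$ onto $(-\infty,\pi_i)^n$ and yields, by construction, a mobility matrix $B(w) = A(u(w))h''(u(w))^{-1}$ that is positive semidefinite (this is exactly the content of the entropy structure recalled in the introduction). Because the entropy density is not defined on all of $\R^n$ and $u\mapsto w$ is not a bijection onto $\R^n$, I would introduce a regularized problem: add the term $\eps\big(\Delta w_i - w_i\big)$ (or an $H^2$-type regularization $\eps(\sum_{|\alpha|\le 2}(-1)^{|\alpha|}D^{2\alpha}w_i + w_i)$) together with a time-discretization of step $\tau$, and solve the resulting elliptic system for $w^\delta$ (writing $\delta=(\eps,\tau)$) at each time step by the Leray--Schauder fixed-point theorem, exactly as in \cite{CDJ18}. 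The a priori estimate for this fixed-point argument and for passing to the limit is the discrete analogue of \eqref{1.ei}, namely the estimate \eqref{3.deis} referred to in the excerpt, which gives $\sqrt{u_i^\delta}$ bounded in $L^2(0,T;H^1(\Omega))$ once \eqref{1.aii} holds, plus the mass control $u_i^\delta$ bounded in $L^\infty(0,T;L^1(\Omega))$ from the entropy itself (the $u_i$ term in $h$).

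Next I would derive the additional estimates needed in dimensions $d=2,3$: testing the (regularized, time-discrete) equation for $u_i$ by $u_i$ itself — or rather using the duality/Gagliardo--Nirenberg method alluded to in the remark after Assumption (A3) and in Lemma \ref{lem.est2} — to obtain an $L^2(Q_T)$ bound on $u_i^\delta$, and then upgrading via $H^1(\Omega)\hookrightarrow L^6(\Omega)$ applied to $\sqrt{u_i^\delta}$ to get the $L^3(Q_T)$ bound claimed in the theorem; this is where Assumption (A2)'s extra hypothesis $\int_\Omega(u_i^0)^2\,dx<\infty$ for $d=2,3$ enters. With $u_i^\delta$ bounded in $L^3(Q_T)$ and $\sqrt{u_i^\delta}$ in $L^2(0,T;H^1(\Omega))$, the flux $u_i^\delta p_i(u^\delta)\na(\cdots)$ — more precisely, after integrating by parts onto test functions, the quantity $u_i^\delta p_i(u^\delta)$ — is bounded in some $L^r(Q_T)$ with $r>1$, so $\pa_t u_i^\delta$ is bounded in $L^{4/3}(0,T;W^{2,4}_\nu(\Omega)')$; note $p_i(u)$ is at most linear in $u$, so $u_i p_i(u)$ is at most quadratic, and quadratic times $L^3$-type control gives the $L^{4/3}$ exponent in time after using the space-time interpolation for $\sqrt{u_i^\delta}\in L^2(H^1)\cap L^\infty(L^1)$.

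Then comes the compactness and limit passage. From the uniform bounds and $\pa_t u_i^\delta \in L^{4/3}(0,T;W^{2,4}_\nu(\Omega)')$, the Aubin--Lions--Simon lemma (applied to $\sqrt{u_i^\delta}$, whose gradient is controlled, with time derivative inferred from that of $u_i^\delta$) yields a subsequence with $u_i^\delta\to u_i$ strongly in, say, $L^2(Q_T)$ and a.e.; the $L^3(Q_T)$ bound plus a.e.\ convergence then upgrades this to strong convergence of $(u_i^\delta)^3$ in $L^1$ and weak convergence in $L^2(\Omega)$ as mentioned in the excerpt (this is the precise place where $d\le 3$ is used). The weak form \eqref{1.weak} is obtained by writing the regularized equation already in the divergence-reduced form $\int_0^T\langle\pa_t u_i^\delta,\phi_i\rangle = \int_0^T\int_\Omega u_i^\delta p_i(u^\delta)\Delta\phi_i\,dx\,dt + (\text{regularization terms})$, noting that $\diver(A_{ij}\na u_j) = \Delta(u_i p_i(u))$ for this particular $A$ — a purely algebraic identity one verifies by expanding $A_{ij}(u)$ — and then sending $\delta\to 0$ using strong convergence of $u_i^\delta$ and of $u_i^\delta p_i(u^\delta)$ (product of $L^3$-strongly-convergent factors gives $L^{3/2}$ strong convergence, more than enough against $\Delta\phi_i\in L^\infty$). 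Positivity $u_i>0$ a.e.\ survives because $\int_\Omega h(u^\delta(t))\,dx$ stays bounded and $h$ blows up as any $u_i\to 0$, so $\log u_i^\delta$ is bounded in $L^\infty(0,T;L^1)$ and $u_i=0$ on a positive-measure set is excluded in the limit. The recovery of the initial datum in $W^{2,4}_\nu(\Omega)'$ follows from the $L^{4/3}(0,T;W^{2,4}_\nu(\Omega)')$ bound on $\pa_t u_i$ together with weak continuity in time.

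The main obstacle I anticipate is making the formal entropy identity \eqref{1.ei} rigorous at the level of the regularized, time-discretized scheme — that is, establishing \eqref{3.deis}. The delicate point is the algebra behind the coefficient $8\pi_i a_{ii} - \sum_{j\neq i}\pi_j a_{ji}$: one must compute $\na w^\top A(u) h''(u)^{-1}\na w$ (equivalently $\na u^\top A(u)^\top h''(u)^{-1}\dots$, being careful about which side $h''$ sits on) with $h''(u) = \mathrm{diag}(\pi_i/u_i^2)$, expand in terms of $\na\log u_i$ and $\na\sqrt{u_i}$, and show that the cross terms can be absorbed so that what remains is a nonnegative combination dominated below by \eqref{1.aii}; the factor $8$ (rather than the $4$ appearing in the square-root entropy) comes precisely from the $u_i^{-2}$ weight interacting with the $a_{ij}u_i$ part of $A_{ij}$, and keeping track of this through the time discretization (where one only has $h(u^k) - h(u^{k-1}) \le \tau^{-1}\,\langle\dots\rangle$-type convexity inequalities rather than an exact derivative) plus the $\eps$-regularization (which contributes a manifestly nonnegative term $\eps\|w^\delta\|_{H^2}^2$ plus lower-order junk that vanishes as $\eps\to0$) is the technical heart of the proof.
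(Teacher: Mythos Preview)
Your overall architecture is right, but there is a genuine gap in the regularization step that would block the argument. You correctly note that $w_i=\pi_i(1-1/u_i)$ maps $(0,\infty)$ only onto $(-\infty,\pi_i)$, not onto $\R$, and then you propose to fix this by adding an $\eps$-regularization of the \emph{equation} (a term like $\eps(\Delta w_i-w_i)$). That does not solve the problem: in the Leray--Schauder fixed-point argument you must, given an arbitrary $w\in H^m(\Omega;\R^n)$, evaluate $u(w)=(h')^{-1}(w)$ to build the operator, and there is no such $u$ when some $w_i\ge\pi_i$. The paper's remedy is different: regularize the \emph{entropy density} itself, setting $h_\eps(u)=h(u)+\eps h^0(u)$ with $h^0(u)=\sum_i u_i(\log u_i-1)$, so that $h_\eps':(0,\infty)^n\to\R^n$ is a genuine bijection and $u(w)$ is globally defined. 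This in turn creates a second obstacle you do not anticipate: $h_\eps''(u)A(u)$ need \emph{not} be positive semidefinite (the Boltzmann piece $\eps H^0(u)A(u)$ has off-diagonal terms $a_{ij}$ with no compensating self-diffusion weight), so one must also regularize the diffusion matrix, $A_\eps(u)=A(u)+\eps A^0(u)$ with $A^0_{ij}(u)=\delta_{ij}(\mu_i/\pi_i)u_i^2$ and $\mu_i$ large enough. The positive-definiteness of $h_\eps''(u)A_\eps(u)$ (Lemma~\ref{lem.HAeps}) is then what drives the discrete entropy inequality; the extra $\eps A^0$ term produces the cubic nonlinearity $(u_i^\delta)^3$ whose control via $H^1\hookrightarrow L^6$ is the actual reason $d\le3$ enters.

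A smaller point: your route to the $L^3(Q_T)$ bound is garbled. Gagliardo--Nirenberg from $\sqrt{u_i}\in L^2(H^1)\cap L^\infty(L^2)$ gives only $u_i\in L^{1+2/d}(Q_T)$, i.e.\ $L^{5/3}$ in $d=3$, and the Sobolev embedding $\sqrt{u_i}\in L^2(0,T;L^6)$ gives $u_i\in L^1(0,T;L^3)$, not $L^3(Q_T)$. The paper's duality argument (test with $\psi_i=(-\Delta)^{-1}(u_i-\bar u_i)$) yields directly $\int_{Q_T}u_i^2 p_i(u)\,dxdt\le C$, and since $p_i(u)\ge a_{ii}u_i$ with $a_{ii}>0$ by \eqref{1.aii}, this is the $L^3(Q_T)$ bound. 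The $L^2(\Omega)$ hypothesis on $u_i^0$ enters because one needs $\|\na\psi_i^0\|_{L^2}\le C\|u_i^0\|_{L^2}$ to close this estimate, not because one tests with $u_i$.
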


Observe that the weak formulation is weaker than the traditional one.
We can change it, after an integration by parts, to the usual weak formulation
$$
  \int_0^T\langle \pa_t u_i,\phi_i\rangle dt = -\int_0^T\int_\Omega
	\big(p_i(u)\na u_i + u_i\na p_i(u)\big)\cdot\na\phi_i dxdt
$$
for all $\phi_i\in L^\infty(0,T;W^{1,\infty}(\Omega))$,
since $u_i\na u_k=u_i\sqrt{u_k}\na\sqrt{u_k}\in L^1(Q_T)$. The regularity is generally
lower compared to the results in \cite{CDJ18}, where $u_i\in L^2(0,T;H^1(\Omega))$
has been proven. The reason is that the logarithmic entropy \eqref{1.h} can be
interpreted to be of ``zero order'' with respect to $u_i$, while the
Boltzmann entropy $\widetilde{h}(u)$, 
which was used in \cite{CDJ18}, is of ``order one'' in $u_i$.

Theorem \ref{thm.ex} is shown by using the entropy method; see, e.g., \cite{Jue16}.
Since the entropy variable $w_i=(\pa h/\pa u_i)(u)=\pi_i(1-1/u_i)$ is not
invertible for every $w_i\in\R$, we regularize the entropy density by
$h_\eps(u)=h(u)+\eps h^0(u)$ for $\eps>0$, where
$h^0(u)=\sum_{i=1}^n u_i(\log u_i-1)$. Then $h_\eps^{-1}:\R\to(0,\infty)$
is well-defined. However, this generally destroys
the entropy structure in the sense that $A(u)h_\eps''(u)^{-1}$ or, equivalently,
$h_\eps''(u)A(u)$ may be not positive semidefinite.
Therefore, we also regularize $A(u)$ by $A_\eps(u)=A(u)+\eps A^0(u)$,
where $A^0(u)$ is a diagonal matrix with entries $(\mu_i/\pi_i)u_i^2$
and sufficiently large numbers $\mu_i>0$. Lemma \ref{lem.HAeps} below shows that
$h_\eps''(u)A_\eps(u)$ is positive definite, which yields some $L^2(\Omega)$
gradient bounds.
Note that our regularization is simpler than that one used in \cite{CDJ18}.

The estimates from the entropy inequality \eqref{1.ei} are not sufficient to
define $u_ip_i(u)$ from \eqref{1.weak} in $L^1(Q_T)$ in the three-dimensional case,
since the Gagliardo--Nirenberg inequality yields $u_i\in L^{1+2/d}(Q_T)$ only
(see \eqref{3.GN}). To obtain better regularity, we exploit the fact that the
SKT model can be written as $\pa_t u_i = \Delta(u_ip_i(u))$,
which allows us to use the duality method. Basically, we use $(-\Delta)^{-1}u_i$ as
a test function, which leads to an estimate for $u_i^2p_i(u)$ in $L^1(Q_T)$
and, because of $a_{ii}>0$ due to \eqref{1.aii}, 
an estimate for $u_i$ in $L^3(Q_T)$.

\begin{theorem}[Large-time behavior]\label{thm.time}
Let Assumptions (A1)--(A3) hold and suppose that $d=1$ and $a_{i0}>0$
for all $i=1,\ldots,n$.
Let $u$ be the weak solution to \eqref{1.eq}--\eqref{1.bic} constructed in
Theorem \ref{thm.ex} and let $\bar{u}_i=\operatorname{meas}(\Omega)^{-1}
\int_\Omega u_idx$ for $i=1,\ldots,n$. Then
$$
  \lim_{t\to\infty}\|u_i(t)-\bar{u}_i\|_{L^1(\Omega)} = 0.
$$
\end{theorem}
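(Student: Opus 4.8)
The plan is to use the relative entropy associated with the logarithmic entropy density \eqref{1.h} as a Lyapunov functional and to derive a suitable entropy--entropy-production inequality that forces convergence. Since the constant steady state is $\bar u=(\bar u_1,\ldots,\bar u_n)$ and the mass of each $u_i$ is conserved (formally take $\phi_i\equiv 1$ in the weak formulation), I would introduce the relative entropy
\begin{equation*}
  H(u|\bar u) = \sum_{i=1}^n \pi_i\int_\Omega\Big(\frac{1}{\bar u_i}(u_i-\bar u_i)
  - \log\frac{u_i}{\bar u_i}\Big)dx \ge 0,
\end{equation*}
which is the Bregman distance of $h$ at $\bar u$ and vanishes iff $u=\bar u$ a.e. The first step is to show, along the constructed weak solution (arguing first on the approximate solutions $u_i^\delta$ and then passing to the limit, exactly as the entropy inequality \eqref{1.ei} was made rigorous in \eqref{3.deis}), that
\begin{equation*}
  \frac{d}{dt}H(u(t)|\bar u) + \sum_{i=1}^n\pi_i a_{i0}\int_\Omega|\na\log u_i|^2dx
  + \kappa\sum_{i=1}^n\int_\Omega|\na\sqrt{u_i}|^2dx \le 0,
\end{equation*}
where $\kappa>0$ is the constant from \eqref{1.aii}; the point is that adding the linear term $\sum_i\pi_i\bar u_i^{-1}(u_i-\bar u_i)$ to $h(u)$ does not change its second derivative, so the entropy production is literally the same as in \eqref{1.ei}, and the extra linear term is constant in time by mass conservation. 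Since $a_{i0}>0$ for all $i$ by hypothesis, this gives in particular $\int_0^\infty\int_\Omega|\na\log u_i|^2dxdt<\infty$ for each $i$.

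The second step is to turn this differential inequality into a decay estimate. In one space dimension, $H^1(\Omega)\hookrightarrow L^\infty(\Omega)$, and I would bound the relative entropy above by the entropy production. Writing $v_i=\log(u_i/\bar u_i)$, one has $\int_\Omega|\na v_i|^2dx \ge C\|v_i-\bar v_i\|_{L^\infty(\Omega)}^2$ by Poincaré--Sobolev, where $\bar v_i$ is the mean of $v_i$; combined with the elementary convexity estimate $s-1-\log s \le C(e^{|\log s|}-1)^2$ on compact subsets and a uniform lower bound $u_i\ge c>0$ (which I expect to follow from the conservation of mass together with the $L^\infty$ control of $\log u_i$, i.e. $\log u_i$ cannot drift to $-\infty$ without violating $\bar u_i=\text{const}$), one obtains a functional inequality $H(u|\bar u)\le C\,\mathcal D(u)$ with $\mathcal D$ the entropy production. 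This yields exponential decay $H(u(t)|\bar u)\le H(u^0|\bar u)e^{-Ct}$, and then the Csisz\'ar--Kullback--Pinsker-type inequality $\|u_i-\bar u_i\|_{L^1(\Omega)}^2 \le C\,H(u|\bar u)$ (valid once $u_i$ is bounded away from $0$ and $\infty$) gives the claimed $L^1$ convergence, in fact with an exponential rate.

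The main obstacle I anticipate is \emph{justifying the uniform-in-time lower and upper bounds on $u_i$}, since these are what make the functional inequality $H\le C\mathcal D$ and the Csisz\'ar--Kullback inequality applicable with constants independent of $t$. The gradient bound $\int_\Omega|\na\log u_i|^2dx$ from the entropy production controls oscillation of $\log u_i$ at each time but only in an integrated-in-time sense, so one must combine it carefully with mass conservation to prevent $u_i$ from degenerating; a clean way is to note that $H(u|\bar u)$ itself being finite and non-increasing already yields $\int_\Omega(-\log u_i)dx$ bounded above, which with $\int_\Omega u_i\,dx$ fixed constrains $u_i$, and then the $H^1$-bound upgrades this to an $L^\infty$ two-sided bound on a full-measure set of times. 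If instead one wants to avoid a quantitative rate, a softer alternative is available: use the fact that $\frac{d}{dt}H(u(t)|\bar u)\le -\kappa\sum_i\int_\Omega|\na\sqrt{u_i}|^2dx$ together with integrability in time to extract a sequence $t_k\to\infty$ along which $\na\sqrt{u_i(t_k)}\to 0$ in $L^2(\Omega)$, hence $u_i(t_k)\to\bar u_i$ (by Poincaré and mass conservation), and then upgrade subsequential convergence to full convergence via the monotonicity of $t\mapsto H(u(t)|\bar u)$. I would present the rate-free argument as the baseline and remark that the functional inequality improves it to exponential decay when the two-sided bounds on $u_i$ are available.
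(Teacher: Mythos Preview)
Your rate-free ``softer alternative'' is essentially the route the paper takes, but you have glossed over the step that carries all the weight: making the relative entropy inequality rigorous for \emph{arbitrary} $0\le s<t$, not just $s=0$. The discrete entropy inequality \eqref{3.deis} does pass to the limit to give an inequality from time $0$ to time $t$ (this is Lemma~\ref{lem.ei}), but for $s>0$ one must pass to the limit in $\int_\Omega\log u_i^{(\tau)}(s)\,dx$ on the right-hand side, and since the limit $u_i(s)$ may touch zero on a null set, this term is not controlled---the low regularity of $\pa_t u_i$ also precludes using $\pa h(u)/\pa u_i$ as a test function directly in \eqref{1.weak}. Without the inequality between arbitrary $s$ and $t$, monotonicity of $t\mapsto H(u(t)|\bar u)$ is unavailable, and your upgrade from subsequential to full convergence collapses. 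The same obstruction reappears when you want to deduce $H(u(t_k)|\bar u)\to 0$ from $u_i(t_k)\to\bar u_i$ in $L^1$: that requires $\int_\Omega\log u_i(t_k)\,dx\to\log\bar u_i\cdot\operatorname{meas}(\Omega)$, which does not follow from $L^1$ convergence alone.

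The paper's remedy is to regularize the relative entropy by replacing $u_i$ with $u_i+\eta$, i.e.\ to work with $\H_\eta(u|\bar u)=\sum_i\pi_i\int_\Omega(\log(\bar u_i+\eta)-\log(u_i+\eta))\,dx$. This bounds the logarithm and allows the limit $(\eps,\tau)\to 0$ at both endpoints $s$ and $t$, but at the price of proving that $H_\eps(u+\eta)A_\eps(u)$ is positive semidefinite up to errors vanishing with $\eps$---a nontrivial matrix computation (Lemma~\ref{lem.HAetaeps}) that is where the hypothesis $a_{i0}>0$ is actually used. Your primary route via uniform two-sided $L^\infty$ bounds on $u_i$ would indeed resolve both issues, but the paper does not establish such bounds, and with the available regularity it is unclear that one can; your own ``main obstacle'' is the obstacle. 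A secondary gap: the step ``$\na\sqrt{u_i(t_k)}\to 0$ hence $u_i(t_k)\to\bar u_i$'' is not immediate from Poincar\'e--Wirtinger, which only controls $\sqrt{u_i}-\overline{\sqrt{u_i}}$ rather than $\sqrt{u_i}-\sqrt{\bar u_i}$; the paper proves a tailored inequality (Lemma~\ref{lem.poincare}) exploiting mass conservation for exactly this purpose.
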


Since $u_i$ conserves the mass, $\bar{u}_i$ is independent of time.
The proof of this result is surprisingly delicate in spite of our restriction
to one space dimension. It is needed to guarantee the continuous embedding
$H^1(\Omega)\hookrightarrow L^\infty(\Omega)$;
see the proof of Lemma \ref{lem.ei.eta}. The usual idea is to show
that the relative entropy, associated to the entropy density \eqref{1.h},
satisfies an inequality similar to \eqref{1.ei} and to estimate the entropy
production term (the gradient bounds) in terms of the relative entropy. Unfortunately,
we have not been able to prove this entropy inequality, since the deregularization
limit in the logarithmic term $\log u_i$ is difficult
and the low integrability of $\pa_t u_i$ and $u_i$ does not allow us to use 
$\pa h(u)/\pa u_i$ as a test function in equation (\ref{1.weak}) to derive 
an entropy inequality. We circumvent this issue by
regularizing the relative entropy:
$$
  \H_\eta(u|\bar{u}) = \sum_{i=1}^n\pi_i
	\int_\Omega(\log(\bar{u}_i+\eta)-\log (u_i+\eta))dx, \quad \eta>0.
$$
The difficult part is to estimate the matrix product
$h_\eps''(u+\eta)A_\eps(u)$. We are able to show that this matrix is positive
definite up to a term of order $O(\sqrt{\eps})$, which vanishes when $\eps\to 0$.
This shows that in the limit $\eps\to 0$, for $0\le s<t$,
$$
  \H_\eta(u(t)|\bar{u})
	+ C\sum_{i=1}^n\int_s^t\int_\Omega|\na\sqrt{u_i+\eta}|^2dxd\sigma
	\le \H_\eta(u(s)|\bar{u}), \quad 0<s<t.
$$
The entropy production can be estimated as (see Lemma \ref{lem.poincare})
$$
	\int_0^\infty\|\sqrt{u_i(t)}-\sqrt{\bar{u}_i}\|_{L^2(\Omega)}^2 dt
	\le C\int_0^\infty\int_\Omega|\na\sqrt{u_i}|^2dxdt \le C(u^0).
$$
Note that the Poincar\'e--Wirtinger inequality would only yield the
difference $\sqrt{u_i(t)}-\overline{\sqrt{u_i}}$.
The previous inequality implies the existence of a sequence
$t_k\to\infty$ as $k\to\infty$ such that
$\|\sqrt{u_i(t_k)}-\sqrt{\bar{u}_i}\|_{L^2(\Omega)}\to 0$.
We will show that this implies the convergence
$\H_\eta(u(t_k)|\bar{u})\to 0$ as $k\to\infty$, and since
the relative entropy is bounded and nonincreasing, this convergence holds for
any sequence $t\to\infty$. Finally, the Csisz\'ar--Kullback inequality
(Proposition \ref{lem.ck} in the Appendix) concludes the proof.

The paper is organized as follows. We prove the positive definiteness of
$h''(u)A(u)$ and $h_\eps''(u)A_\eps(u)$ in Section \ref{sec.aux}. Theorem
\ref{thm.ex} is proved in Section \ref{sec.approx}, while Section \ref{sec.time}
is devoted to the proof of Theorem \ref{thm.time}. Some auxiliary results
are collected in Appendix \ref{sec.app}.

%%%%%%%%%%%%%%%%%%%%%%%%%%%%%%%%%%%%%%%%%%%%%%%%%%%%%%%%%%%%%%%%%%%%%%%%%%%%%%%

\section{Positive definiteness of mobility matrices}\label{sec.aux}

We introduce the Hessian matrix of $h(u)$, defined in \eqref{1.h},
by $H(u)=h''(u)$ with entries
$H_{ij}(u)=\delta_{ij}\pi_i u_i^{-2}$ for $i,j=1,\ldots,n$.

\begin{lemma}%\label{lem.HA}
It holds for any $z\in\R^n$ and $u\in(0,\infty)^n$ that
$$
  z^T H(u)A(u)z \ge \sum_{i=1}^n \pi_i a_{i0}\frac{z_i^2}{u_i^2}
	+ \frac14\sum_{i=1}^n\bigg(8\pi_i a_{ii} - \sum_{j=1,\,j\neq i}^n\pi_j a_{ji}
	\bigg)\frac{z_i^2}{u_i}.
$$
\end{lemma}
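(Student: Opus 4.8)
The plan is to compute $z^{T}H(u)A(u)z$ directly and then match the result term by term against the right-hand side of the claim. Since $H(u)$ is diagonal with $H_{ii}(u)=\pi_i u_i^{-2}$, we have $z^{T}H(u)A(u)z=\sum_{i}\pi_i u_i^{-2}z_i\sum_{j}A_{ij}(u)z_j$, and substituting $A_{ij}(u)=\delta_{ij}a_{i0}+\delta_{ij}\sum_{k}a_{ik}u_k+a_{ij}u_i$ splits this into three groups,
\[
  z^{T}H(u)A(u)z=\sum_{i=1}^{n}\pi_i a_{i0}\frac{z_i^{2}}{u_i^{2}}
  +\sum_{i,k=1}^{n}\pi_i a_{ik}\frac{z_i^{2}u_k}{u_i^{2}}
  +\sum_{i,j=1}^{n}\pi_i a_{ij}\frac{z_iz_j}{u_i}.
\]
The first group is already exactly the $a_{i0}$-term in the statement, so it remains to bound the other two from below. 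From those two I would first peel off the diagonal contributions ($k=i$, respectively $j=i$): each of these equals $\sum_i\pi_i a_{ii}z_i^{2}/u_i$, so together they produce $2\sum_i\pi_i a_{ii}z_i^{2}/u_i=\tfrac14\sum_i 8\pi_i a_{ii}\,z_i^{2}/u_i$, which is precisely the ``good'' part of the target.

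What then remains is the off-diagonal remainder
\[
  R:=\sum_{i=1}^{n}\sum_{j\neq i}\pi_i a_{ij}\frac{z_i^{2}u_j}{u_i^{2}}
  +\sum_{i=1}^{n}\sum_{j\neq i}\pi_i a_{ij}\frac{z_iz_j}{u_i},
\]
whose first double sum is nonnegative because all $a_{ij}\ge 0$, while the second double sum is the only indefinite piece; the whole argument reduces to absorbing the latter into the former. The key step is a weighted Young inequality for each cross term: writing $z_iz_j/u_i=(z_i\sqrt{u_j}/u_i)(z_j/\sqrt{u_j})$ and using $ab\ge-\tfrac{1}{2\lambda}a^{2}-\tfrac{\lambda}{2}b^{2}$ with the \emph{specific} weight $\lambda=\tfrac12$ gives
\[
  \pi_i a_{ij}\frac{z_iz_j}{u_i}\ge-\pi_i a_{ij}\frac{z_i^{2}u_j}{u_i^{2}}
  -\frac14\,\pi_i a_{ij}\frac{z_j^{2}}{u_j}.
\]
Summed over $i\neq j$, the first term on the right is now cancelled \emph{exactly} by the nonnegative first double sum in $R$, leaving $R\ge-\tfrac14\sum_i\sum_{j\neq i}\pi_i a_{ij}z_j^{2}/u_j$. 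Relabelling the summation pair $(i,j)\mapsto(j,i)$ rewrites this lower bound as $-\tfrac14\sum_i\bigl(\sum_{j\neq i}\pi_j a_{ji}\bigr)z_i^{2}/u_i$. Adding it to the diagonal contribution $\tfrac14\sum_i 8\pi_i a_{ii}z_i^{2}/u_i$ and to the $a_{i0}$-group yields exactly the asserted inequality; this is the pointwise estimate underlying the entropy inequality \eqref{1.ei}.

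I do not expect a genuine analytic obstacle: the statement is an elementary pointwise inequality in $z$ and $u$, and the only real decision is the choice of the Young weight. Note, however, that $\lambda=\tfrac12$ is forced --- a smaller weight gives a coefficient larger than $1$ in front of $z_i^{2}u_j/u_i^{2}$, which the nonnegative self-term cannot absorb, while a larger weight only worsens the constant in front of $z_j^{2}/u_j$ --- so the factor $\tfrac14$, equivalently the threshold $8\pi_i a_{ii}$ appearing in \eqref{1.aii} and in the statement, is the best this splitting produces. The only point requiring a little care is the bookkeeping of which index is summed where in the final relabelling.
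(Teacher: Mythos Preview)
Your proof is correct and follows essentially the same approach as the paper: expand the quadratic form, separate the diagonal ($j=i$, $k=i$) contributions to obtain $2\sum_i\pi_i a_{ii}z_i^2/u_i$, and apply the weighted Young inequality to each off-diagonal cross term so that the resulting $\pi_i a_{ij}z_i^2u_j/u_i^2$ piece cancels exactly against the nonnegative self-term, leaving $-\tfrac14\sum_{j\neq i}\pi_j a_{ji}z_i^2/u_i$ after relabelling. Your additional remark that the Young weight $\lambda=\tfrac12$ is forced (and hence the constant $8$ in \eqref{1.aii} is optimal for this splitting) is a nice observation not made explicit in the paper.
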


\begin{proof}
The elements of the matrix $H(u)A(u)$ equal
\begin{align*}
  (H(u)A(u))_{ij} &= \delta_{ij}\pi_i \frac{a_{i0}}{u_i^{2}}
	+ \delta_{ij}\sum_{k=1}^n \pi_i a_{ik}\frac{u_k}{u_i^{2}}
	+ \pi_i \frac{a_{ij}}{u_i} \\
	&= \delta_{ij}\pi_i  \frac{a_{i0}}{u_i^{2}}
	+ \delta_{ij}\bigg(2\pi_i \frac{a_{ij}}{u_i}
	+ \sum_{k=1,\,k\neq i}^n \pi_i a_{ik}\frac{u_k}{u_i^{2}}\bigg)
	+ (1-\delta_{ij})\pi_i \frac{a_{ij}}{u_i}.
\end{align*}
This gives for all $z\in\R^n$:
\begin{align}\label{2.aux}
  z^T H(u)A(u)z &= \sum_{i=1}^n\pi_i a_{i0}\frac{z_i^2}{u_i^2}
	+ 2\sum_{i=1}^n \pi_i a_{ii}\frac{z_i^2}{u_i} \\
	&\phantom{xx}{}+ \sum_{i=1}^n\sum_{k=1,\,k\neq i}^n\pi_i a_{ik}u_k\frac{z_i^2}{u_i^2}
	+ \sum_{i,j=1,\,i\neq j}^n\pi_i a_{ij} \frac{z_iz_j}{u_i}. \nonumber
\end{align}
We use Young's inequality to estimate the last term:
\begin{align*}
  \sum_{i,j=1,\,i\neq j}^n\pi_i a_{ij} \frac{z_iz_j}{u_i}
	&\ge -\sum_{i,j=1,\,i\neq j}^n\pi_i a_{ij}
	\bigg(\frac{u_j}{u_i^2}z_i^2 + \frac14 \frac{z_j^2}{u_j}\bigg) \\
	&= -\sum_{i,j=1,\,i\neq j}^n\pi_i a_{ij}\frac{u_j}{u_i^2}z_i^2
	- \frac14\sum_{i,j=1,\,i\neq j}^n\pi_j a_{ji}\frac{z_i^2}{u_i}.
\end{align*}
The first term on the right-hand side cancels with the third term on the
right-hand side of \eqref{2.aux}. Therefore,
$$
  z^T H(u)A(u)z \ge \sum_{i=1}^n\pi_i a_{i0}\frac{z_i^2}{u_i^2}
	+ \sum_{i=1}^n\bigg(2\pi_ia_{ii} - \frac14\sum_{i,j=1,\,i\neq j}^n\pi_j a_{ji}\bigg)
	\frac{z_i^2}{u_i}
$$
which finishes the proof.
\end{proof}

For $\eps>0$, we define the approximate entropy density
\begin{equation}\label{2.h0}
  h_\eps(u) = h(u) + \eps h^0(u), \quad\mbox{where }
	h^0(u) = \sum_{i=1}^n u_i(\log u_i-1).
\end{equation}
We set $H^0(u)=(h^0)''(u)$ with entries $H_{ij}^0(u)=\delta_{ij}u_i^{-1}$,
$i,j=1,\ldots,n$, and
$$
  H_\eps(u) = H(u) + \eps H^0(u), \quad\mbox{where }
	H_{\eps,ij}(u) = \delta_{ij}\bigg(\frac{\pi_i}{u_i^2} + \frac{\eps}{u_i}\bigg).
$$
We also need to approximate the diffusion matrix:
$$
  A_\eps(u) = A(u) + \eps A^0(u), \quad\mbox{where }
	A^0_{ij}(u) = \delta_{ij}\frac{\mu_i}{\pi_i}u_i^2,
$$
imposing that $\mu_i\ge \sum_{j\neq i}(a_{ij}+a_{ji})/2$.
The latter condition is necessary to prove that the product $H_\eps(u)A_\eps(u)$
is also positive definite.

\begin{lemma}\label{lem.HAeps}
It holds for any $z\in\R^n$ and $u\in(0,\infty)^n$ that
$$
  z^T H_\eps(u)A_\eps(u)z \ge z^TH(u)A(u)z + 2\eps\sum_{i=1}^n a_{ii}z_i^2
	+ \eps^2\sum_{i=1}^n\frac{\mu_i}{\pi_i}u_iz_i^2.
$$
\end{lemma}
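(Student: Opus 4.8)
The plan is to exploit bilinearity: writing $H_\eps(u)A_\eps(u) = H(u)A(u) + \eps\big(H(u)A^0(u) + H^0(u)A(u)\big) + \eps^2 H^0(u)A^0(u)$, one gets
$$
z^T H_\eps(u)A_\eps(u)z = z^T H(u)A(u)z + \eps\,z^T\big(H(u)A^0(u) + H^0(u)A(u)\big)z + \eps^2\,z^T H^0(u)A^0(u)z ,
$$
so the term $z^T H(u)A(u)z$ is carried through untouched and it only remains to bound the $O(\eps)$ and $O(\eps^2)$ contributions from below. Since $H(u)$, $H^0(u)$ and $A^0(u)$ are diagonal, the products $H(u)A^0(u)$ and $H^0(u)A^0(u)$ are diagonal too: a one-line computation gives $(H(u)A^0(u))_{ij} = \delta_{ij}\mu_i$ and $(H^0(u)A^0(u))_{ij} = \delta_{ij}(\mu_i/\pi_i)u_i$, hence $z^T H(u)A^0(u)z = \sum_i \mu_i z_i^2$ and $z^T H^0(u)A^0(u)z = \sum_i(\mu_i/\pi_i)u_i z_i^2$. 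The latter is \emph{exactly} the $\eps^2$-term on the right-hand side of the claim, so what must be shown is $\sum_i\mu_i z_i^2 + z^T H^0(u)A(u)z \ge 2\sum_i a_{ii}z_i^2$.

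To handle $z^T H^0(u)A(u)z$ I would compute $(H^0(u)A(u))_{ij} = u_i^{-1}A_{ij}(u) = \delta_{ij}a_{i0}/u_i + \delta_{ij}\sum_k a_{ik}u_k/u_i + a_{ij}$, giving
$$
z^T H^0(u)A(u)z = \sum_{i=1}^n\frac{a_{i0}}{u_i}z_i^2 + \sum_{i,k=1}^n a_{ik}\frac{u_k}{u_i}z_i^2 + \sum_{i,j=1}^n a_{ij}z_iz_j .
$$
The first sum is nonnegative and is discarded; the off-diagonal part $\sum_{i\neq k}a_{ik}(u_k/u_i)z_i^2$ of the second sum is likewise nonnegative and discarded; and the diagonal part of the second sum ($k=i$) together with the diagonal part of the third sum ($i=j$) produces exactly $2\sum_i a_{ii}z_i^2$, which is precisely the target. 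It thus remains to prove $\sum_i\mu_i z_i^2 + \sum_{i\neq j}a_{ij}z_iz_j \ge 0$, which I would get from the plain Young inequality $a_{ij}z_iz_j \ge -\tfrac12 a_{ij}(z_i^2 + z_j^2)$: summing over $i\neq j$ and relabeling indices in the $z_j^2$-part yields $\sum_{i\neq j}a_{ij}z_iz_j \ge -\sum_i z_i^2\sum_{j\neq i}\tfrac12(a_{ij}+a_{ji})$, and the standing assumption $\mu_i\ge\sum_{j\neq i}(a_{ij}+a_{ji})/2$ closes it. Adding the three estimates gives the lemma.

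The one point demanding care — the ``main obstacle'' — is the choice of Young's inequality in the final step. Unlike in the previous lemma, where the weighted version with factors $u_j/u_i$ was natural, here one must \emph{not} introduce such weights, because $u_j/u_i$ is unbounded on $(0,\infty)^n$ while the only positive $z_i^2$-reservoir available, $\sum_i\mu_i z_i^2$, carries no $u$-dependence, so there would be nothing to absorb weighted cross terms against. This is exactly what dictates the hypothesis $\mu_i\ge\sum_{j\neq i}(a_{ij}+a_{ji})/2$ built into $A^0(u)$. Everything else is bookkeeping: separating diagonal from off-diagonal entries, and noticing that the leftover $a_{i0}u_i^{-1}z_i^2$ and off-diagonal $a_{ik}(u_k/u_i)z_i^2$ terms are nonnegative and can simply be thrown away.
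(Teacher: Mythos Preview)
Your proof is correct and follows essentially the same route as the paper: the same bilinear decomposition into $O(1)$, $O(\eps)$, $O(\eps^2)$ pieces, the same diagonal computations for $H(u)A^0(u)$ and $H^0(u)A^0(u)$, the same discarding of the nonnegative $a_{i0}/u_i$ and off-diagonal $a_{ik}u_k/u_i$ terms, and the same unweighted Young inequality $z_iz_j\ge -\tfrac12(z_i^2+z_j^2)$ absorbed via the hypothesis on $\mu_i$. Your closing remark on why the unweighted Young inequality is forced here is a nice piece of commentary that the paper does not make explicit.
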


\begin{proof}
We decompose the product $H_\eps(u)A_\eps(u)$ as
$$
  H_\eps(u)A_\eps(u) = H(u)A(u) + \eps\big(H^0(u)A(u) + H(u)A^0(u)\big)
	+ \eps^2 H^0(u)A^0(u).
$$
We compute first the terms of order $\eps$:
\begin{align*}
  (H^0(u)A(u))_{ij} &= \delta_{ij}\bigg(\frac{a_{i0}}{u_i}
	+ \sum_{k\neq i}a_{ik}\frac{u_k}{u_i} + 2a_{ii}\bigg) + (1-\delta_{ij})a_{ij}, \\
	(H(u)A^0(u))_{ij} &= \delta_{ij}\mu_i,
\end{align*}
which yields
\begin{align*}
  z^T\big(H^0(u)A(u) + H(u)A^0(u)\big)z
	&= \sum_{i=1}^n\bigg(\frac{a_{i0}}{u_i} + \sum_{k\neq i}a_{ik}\frac{u_k}{u_i}
	+ 2a_{ii} + \mu_i\bigg)z_i^2 \\
	&\phantom{xx}{}+ \sum_{i=1}^n\sum_{j=1,\,j\neq i}^n a_{ij}z_iz_j \\
	&\ge \sum_{i=1}^n\bigg(2a_{ii} + \mu_i\bigg)z_i^2
	+ \sum_{i=1}^n\sum_{j=1,\,j\neq i}^n a_{ij}z_iz_j.
\end{align*}
The last term is estimated by using Young's inequality
$z_iz_j\ge -(z_i^2+z_j^2)/2$:
\begin{align*}
  \sum_{i=1}^n\sum_{j=1,\,j\neq i}^n a_{ij}z_iz_j
	&\ge -\frac12\sum_{i=1}^n\sum_{j=1,\,j\neq i}^n(a_{ij}z_i^2 + a_{ij}z_j^2) \\
	&= -\frac12\sum_{i=1}^n\sum_{j=1,\,j\neq i}^n a_{ij}z_i^2
	- \frac12\sum_{j=1}^n\sum_{i=1,\,i\neq j}^n a_{ji}z_i^2,
\end{align*}
which, because of the choice of $\mu_i$, shows that
\begin{align*}
  z^T\big(H^0(u)A(u) + H(u)A^0(u)\big)z
	&\ge 2\sum_{i=1}^n a_{ii}z_i^2 + \sum_{i=1}^n\bigg(\mu_i
	- \frac12\sum_{j=1,\,j\neq i}^n(a_{ij}+a_{ji})\bigg)z_i^2 \\
	&\ge 2\sum_{i=1}^n a_{ii}z_i^2.
\end{align*}
The $\eps^2$-term becomes $z^T H^0(u)A^0(u)z=\sum_{i=1}^n(\mu_i/\pi_i)u_iz_i^2$.
Collecting these terms, the proof follows.
\end{proof}

%%%%%%%%%%%%%%%%%%%%%%%%%%%%%%%%%%%%%%%%%%%%%%%%%%%%%%%%%%%%%%%%%%%%%%%%%%%%%%%

\section{Proof of Theorem \ref{thm.ex}}\label{sec.approx}

Let $T>0$, $N\in\N$, $\tau=T/N>0$, $\delta>0$, and $\eps>0$.
Let $u_\eps^0=(u_{\eps,1}^0,\ldots,u_{\eps,n}^0)$ be a componentwise bounded
sequence of functions with positive lower bounds satisfying 
$h(u_\eps^0)\to h(u^0)$ strongly in
$L^1(\Omega)$ and $u_\eps^0\to u^0$ strongly in $L^2(\Omega)$ as $\eps\to 0$.

{\em Step 1: Solution of an approximated problem.}
We introduce the entropy variables $w_i=(\pa h_\eps/\pa u_i)(u)
=\pi_i(1-1/u_i)+\eps\log u_i$, $i=1,\ldots,n$. Since the range of $h'_\eps$ is
$\R^n$, the transformation $u:\R^n\to(0,\infty)^n$, $u(w)=(h'_\eps)^{-1}(w)$,
is well defined. Furthermore, we introduce the mobility matrix
$B_\eps(w)=A_\eps(u(w))H_\eps(u(w))^{-1}$. By construction of $u_\eps^0$,
we can define $w^0=h'_\eps(u_\eps^0)$, and this is an element of
$L^\infty(\Omega;\R^n)$. Then $u(w^0)=u_\eps^0$.
Let $m=1$ if $d=1$ and $m=2$ if $d=2,3$.
Given $k\in\N$ and $w^{k-1}\in L^\infty(\Omega;\R^n)$,
we wish to find $w^k\in H^m(\Omega;\R^n)$ solving
\begin{align}\label{3.eps}
  \frac{1}{\tau}\int_\Omega&(u(w^k)-u(w^{k-1}))\cdot\phi dx
	+ \int_\Omega\na\phi:B_\eps(w^k)\na w^k dx \\
	&{}+ \delta\int_\Omega\bigg(\sum_{|\alpha|=m}D^\alpha w^k\cdot D^\alpha\phi
	+ w^k\cdot\phi\bigg)dx = 0 \nonumber
\end{align}
for all $\phi\in H^m(\Omega;\R^n)$, where $\alpha=(\alpha_1,\ldots,\alpha_d)\in\N_0^d$
is a multiindex and
$D^\alpha$ equals the partial derivative
$\pa^{|\alpha|}/\pa x_1^{\alpha_1}\cdots\pa x_d^{\alpha_d}$.

We claim that the existence of a weak solution $w^k$ follows from
\cite[Lemma 5]{Jue15}. The construction of $h_\eps$ ensures that Hypothesis
H1 of \cite{Jue15} is satisfied. Lemma \ref{lem.HAeps} shows that
Hypothesis H2 holds as well. Also Hypothesis H3 is fulfilled since \eqref{1.eq}
does not contain any source terms. We deduce from \cite[Lemma 5]{Jue15} that
there exists a weak solution $w^k\in H^m(\Omega;\R^n)$ to \eqref{3.eps}, satisfying
the discrete entropy inequality
\begin{align}\label{3.dei}
  \int_\Omega &h_\eps(u(w^k))dx + \tau\int_\Omega\na w^k:B_\eps(w^k)\na w^k dx \\
	&{}+ \delta\tau\int_\Omega\bigg(\sum_{|\alpha|=m}|D^\alpha w^k|^2 + |w^k|^2\bigg)dx
	\le \int_\Omega h_\eps(u(w^{k-1}))dx. \nonumber
\end{align}

We derive some estimates for $w^k$ and $u^k:=u(w^k)$.
According to Lemma \ref{lem.HAeps}, the second term in \eqref{3.dei} can be estimated
as follows:
\begin{align}\label{3.nablaw}
  \int_\Omega&\na w^k:B_\eps(w^k)\na w^k dx
	= \int_\Omega \na u^k:H_\eps(u^k)A_\eps(u^k)\na u^k dx \\
	&\ge \sum_{i=1}^n\int_\Omega\Big\{
	\kappa|\na(u_i^k)^{1/2}|^2+ 2\eps\Big(\min_{i=1,\ldots,n}a_{ii}\Big)|\na u_i^k|^2
	\Big\}dx, \nonumber
\end{align}
recalling definition \eqref{1.aii} of $\kappa$.
Therefore, since $a_{ii}>0$ by Assumption (A3), summing \eqref{3.dei} over
$k=1,\ldots,j$,
\begin{align}\label{3.deis}
  \int_\Omega & h_\eps(u^j)dx + C\tau\sum_{k=1}^j\sum_{i=1}^n\int_\Omega
	\big(|\na(u_i^k)^{1/2}|^2 + \eps|\na u_i^k|^2\big)dx \\
	&{}+ \delta\tau\sum_{k=1}^j\int_\Omega\bigg(\sum_{|\alpha|=m}|D^\alpha w^k|^2
	+ |w^k|^2\bigg)dx \le \int_\Omega h_\eps(u^0_\eps)dx \le C, \nonumber
\end{align}
where $C>0$ denotes here and in the following a constant which is
independent of $\delta$, $\eps$, and $\tau$ with values changing from line to line.

To derive bounds in $H^1(\Omega)$, we apply the Poincar\'e--Wirtinger inequality
for which we need a uniform estimate for $u_i^j$. We take the test function
$\phi=(\delta_{i1},\ldots,\delta_{in})$ in \eqref{3.eps} and sum the
resulting equation over $k=1,\ldots,j$. Then, taking into account \eqref{3.deis},
\begin{align}\label{3.L1}
  0 &\le \int_\Omega u_i^j dx = \int_\Omega u_i^0 dx
	- \delta\tau\sum_{k=1}^j\int_\Omega w_i^k dx \\
	&\le \int_\Omega u_i^0 dx + \frac{\delta}{2}\tau\sum_{k=1}^j\int_\Omega
	((w_i^k)^2 + 1)dx \le C(u^0,T,\Omega). \nonumber
\end{align}
We infer that
\begin{equation}\label{3.uw}
  \|u^k\|_{H^1(\Omega)}\le C(\eps,\tau), \quad
	\sqrt{\delta}\|w^k\|_{H^m(\Omega)} \le C(\tau).
\end{equation}

{\em Step 2: Limit $\delta\to 0$.} Let $w^\delta:=w^k$ and $u^\delta:=u^k$.
Before we pass to the limit $\delta\to 0$, we derive
a very weak formulation for $u^\delta$. It holds that
\begin{align*}
  (B_\eps(w^\delta)\na w^{\delta})_i
	&= (A_\eps(u^{\delta})\na u^{\delta})_i
	= \eps(A^0(u^{\delta})\na u^{\delta})_i + \na(u_i^{\delta}p_i(u^{\delta})) \\
	&= \frac{\eps}{3}\frac{\mu_i}{\pi_i}\na(u_i^{\delta})^3
	+ \na(u_i^{\delta}p_i(u^{\delta})).
\end{align*}
Therefore, in view of \eqref{3.eps}, $(u^\delta,w^\delta)$ solves for all
$\phi\in H^2_\nu(\Omega;\R^n)$,
\begin{align}\label{3.delta}
  \frac{1}{\tau}\int_\Omega&(u^\delta-u^{k-1})\cdot\phi dx
	-\sum_{i=1}^n\int_\Omega\bigg(\frac{\eps}{3}\frac{\mu_i}{\pi_i}(u_i^\delta)^3
	+ u_i^\delta p_i(u^\delta)\bigg)\Delta\phi_i dx \\
	&{}+\delta\int_\Omega\bigg(\sum_{|\alpha|=m}D^\alpha w^\delta\cdot D^\alpha\phi
	+ w^\delta\cdot\phi\bigg)dx = 0. \nonumber
\end{align}
In view of the uniform bounds \eqref{3.uw} and using the compact embedding
$H^1(\Omega)\hookrightarrow L^4(\Omega)$ (if $d\le 3$),
there exist subsequences of $(u^\delta)$ and $(w^\delta)$, which are not relabeled,
such that, as $\delta\to 0$,
$$
  u^\delta\to u \quad\mbox{strongly in }L^4(\Omega), \quad
	\delta w^\delta\to 0\quad\mbox{strongly in }H^m(\Omega).
$$
It follows from the linearity of $p_i$ that
$u_i^\delta p_i(u^\delta)\to u_ip_i(u)$ strongly
in $L^2(\Omega)$. Moreover, up to a subsequence, $u^\delta\to u$ a.e.\ in
$\Omega$ and, because of the continuous embedding $H^1(\Omega)\hookrightarrow
L^6(\Omega)$ for $d\le 3$,
$(u_i^\delta)^3\rightharpoonup u_i^3$ weakly in $L^2(\Omega)$.
Thus, passing to the limit $\delta\to 0$ in \eqref{3.delta},
we find that, for all $\phi\in H^2_\nu(\Omega;\R^n)$,
\begin{equation}\label{3.uk}
  \frac{1}{\tau}\int_\Omega(u^k-u^{k-1})\cdot\phi dx
	= \sum_{i=1}^n\int_\Omega\bigg(\frac{\eps}{3}\frac{\mu_i}{\pi_i}(u_i^k)^3
	+ u_i^k p_i(u^k)\bigg)\Delta\phi_i dx,
\end{equation}
where we have set $u^k:=u$.

%We wish to pass to the limit $\delta\to 0$ in the inequality
%\begin{equation}\label{3.heps}
%  \int_\Omega h_\eps(u^\delta)dx \le \int_\Omega h_\eps(u^0_\eps)dx,
%\end{equation}
%which is obtained from \eqref{3.deis}. As $u_i^\delta\to u_i$ a.e.\ in $\Omega$,
%it holds that $w_i^\delta=\pi_i(1-1/u_i^\delta)+\eps\log u_i^\delta\to
%w_i:=\pi_i(1-1/u_i)+\eps\log u_i$ a.e.\ in $\{u_i>0\}$ and $w_i^\delta\to\infty$
%a.e.\ in $\{u_i=0\}$. In view of \eqref{3.heps}, we infer from Fatou's lemma that
%$\operatorname{meas}\{u_i=0\}=0$, which implies that
%$\int_\Omega h_\eps(u)dx\le \int_\Omega h_\eps(u^0_\eps)dx$.
%It follows from $z-\log z\ge|\log z|$ and $z(\log z-1)\ge -1$ for all $z>0$ that
%$h_\eps(u)\ge\sum_{i=1}^n\pi_i|\log u_i| - \eps$. We infer that $u_i>0$ and
%consequently $w_i^\delta\to w_i$ a.e.\ in $\Omega$. Then \eqref{3.uw} shows that
%$$
%	\|\na w_i\|_{L^2(\Omega)}\le C\quad\mbox{for }i=1,\ldots,n.
%$$

{\em Step 3: Bounds uniform in $(\eps,\tau)$.}
We introduce piecewise in time constant functions and formulate some
bounds uniform in $(\eps,\tau)$. Let
$u^{(\tau)}(x,t)=u(x)$ for $x\in\Omega$,
$t\in((k-1)\tau,k\tau]$. At time $t=0$, we set
$u^{(\tau)}(\cdot,0)=u^0_\eps$. Furthermore, let
$u^{(\tau)}=(u_1^{(\tau)},\ldots,u_n^{(\tau)})$.
We define the backward shift operator
$(\sigma_\tau u^{(\tau)})(x,t) = u^{k-1}(x)$ for $x\in\Omega$,
$t\in((k-1)\tau,k\tau]$. In view of \eqref{3.uk}, $u^{(\tau)}$ solves
\begin{align}\label{3.aux2}
  \frac{1}{\tau}&\int_0^T\int_\Omega(u^{(\tau)}-\sigma_\tau u^{(\tau)})\cdot\phi dxdt \\
	&= \sum_{i=1}^n\int_0^T\int_\Omega
	\bigg(\frac{\eps}{3}\frac{\mu_i}{\pi_i}(u_i^{(\tau)})^3
	+ u_i^{(\tau)} p_i(u^{(\tau)})\bigg)\Delta \phi_i dxdt \nonumber
\end{align}
for piecewise constant functions $\phi:(0,T)\to H^2_\nu(\Omega;\R^n)$.
By a density argument, this equation also holds for all
$\phi\in L^2(0,T;H^2_\nu(\Omega;\R^n))$ \cite[Prop.~1.36]{Rou05}.

We conclude from the summarized discrete entropy inequality \eqref{3.deis},
the $L^1(\Omega)$ estimate \eqref{3.L1}, and the Poincar\'e--Wirtinger
inequality the following $(\eps,\tau)$-independent bounds.

\begin{lemma}\label{lem.est1}
There exists a constant $C>0$, which is independent of $\eps$ and $\tau$, such that
for all $i=1,\ldots,n$,
$$
  \|u_i^{(\tau)}\|_{L^\infty(0,T;L^1(\Omega))}
	+ \|(u_i^{(\tau)})^{1/2}\|_{L^2(0,T;H^1(\Omega))}
	+ \sqrt{\eps}\|u_i^{(\tau)}\|_{L^2(0,T;H^1(\Omega))} \le C.
$$
\end{lemma}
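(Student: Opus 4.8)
The plan is to derive each of the three bounds in Lemma~\ref{lem.est1} from the estimates already assembled for the discrete-in-time solutions $u^k$, and then transfer them to the piecewise-constant interpolant $u^{(\tau)}$, since all bounds are of a form (norm of an $L^p$-in-time Bochner space) that passes to piecewise-constant interpolants with the same constant.

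\emph{First}, I would address the $L^\infty(0,T;L^1(\Omega))$ bound. This is immediate from \eqref{3.L1}: for every $k$ we have $0\le\int_\Omega u_i^k\,dx\le C(u^0,T,\Omega)$ with $C$ independent of $\eps$ and $\tau$ (the only $\delta$-dependence in \eqref{3.L1} disappears in the limit $\delta\to0$ carried out in Step~2, and one checks that the bound $\int_\Omega u_i^0\,dx+\frac{\delta}{2}\tau\sum_k\int_\Omega((w_i^k)^2+1)\,dx$ is in fact controlled using $\delta\tau\sum_k\|w^k\|_{L^2}^2\le C$ from \eqref{3.deis}, hence uniform). Taking the supremum over $k\le N$ and recalling $u^{(\tau)}(\cdot,t)=u^k$ on $((k-1)\tau,k\tau]$ gives $\|u_i^{(\tau)}\|_{L^\infty(0,T;L^1(\Omega))}\le C$.

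\emph{Second}, for the gradient terms I would use the summed discrete entropy inequality \eqref{3.deis}, which reads (after dropping the nonnegative $h_\eps(u^j)$ and $\delta$-terms, and using that $h_\eps(u^0_\eps)\le C$ uniformly by the construction of $u_\eps^0$)
\[
  C\tau\sum_{k=1}^{N}\sum_{i=1}^n\int_\Omega\big(|\na(u_i^k)^{1/2}|^2+\eps|\na u_i^k|^2\big)\,dx\le C .
\]
Since $\tau\sum_{k=1}^N\int_\Omega|\na(u_i^k)^{1/2}|^2\,dx=\int_0^T\int_\Omega|\na(u_i^{(\tau)})^{1/2}|^2\,dx\,dt$, this already controls $\|\na(u_i^{(\tau)})^{1/2}\|_{L^2(Q_T)}$ and $\sqrt{\eps}\,\|\na u_i^{(\tau)}\|_{L^2(Q_T)}$. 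To upgrade the gradient bound on $(u_i^{(\tau)})^{1/2}$ to a full $H^1$ bound I would apply the Poincar\'e--Wirtinger inequality: $\|(u_i^k)^{1/2}-\overline{(u_i^k)^{1/2}}\|_{L^2(\Omega)}\le C\|\na(u_i^k)^{1/2}\|_{L^2(\Omega)}$, and control the mean $\overline{(u_i^k)^{1/2}}$ by $\big(\fint_\Omega u_i^k\big)^{1/2}$ via Jensen's inequality, which is uniformly bounded by the first step. Squaring, integrating in time, and combining with the gradient bound yields $\|(u_i^{(\tau)})^{1/2}\|_{L^2(0,T;H^1(\Omega))}\le C$; similarly the $L^2(\Omega)$-bound on $u_i^k=((u_i^k)^{1/2})^2$ (needed for the $\eps$-term) follows from Gagliardo--Nirenberg or simply from $\|u_i^k\|_{L^2(\Omega)}^2\le C\|(u_i^k)^{1/2}\|_{H^1(\Omega)}^4$ together with the uniform $L^1$-control, although for the stated $\sqrt{\eps}$-weighted $H^1$-bound one only needs the $\eps|\na u_i^k|^2$ term from \eqref{3.deis} plus the analogous Poincar\'e argument with mean $\fint_\Omega u_i^k$.

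The main obstacle, such as it is, is making sure every constant is genuinely independent of $\eps$ and $\tau$: this hinges on the uniform bound $\int_\Omega h_\eps(u^0_\eps)\,dx\le C$, which is why the approximating data were constructed so that $h(u^0_\eps)\to h(u^0)$ in $L^1(\Omega)$ and $u^0_\eps\to u^0$ in $L^2(\Omega)$ (so that $\eps h^0(u^0_\eps)\to0$), and on the observation that $h_\eps\ge h$ is bounded below independently of $\eps$ (since $h(u)=\sum_i\pi_i(u_i-\log u_i)\ge\sum_i\pi_i$ and $h^0(u)=\sum_i u_i(\log u_i-1)\ge-\sum_i 1$), so the dropped entropy term on the left of \eqref{3.deis} is harmless. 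Everything else is a routine rewriting of discrete sums as time-integrals of the piecewise-constant interpolant.
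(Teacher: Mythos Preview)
Your proposal is correct and follows essentially the same approach as the paper: the paper's own ``proof'' is a single sentence pointing to \eqref{3.deis}, \eqref{3.L1}, and the Poincar\'e--Wirtinger inequality, and you have simply spelled out how those three ingredients combine. One small remark: the phrase ``$h_\eps\ge h$ is bounded below'' is not literally true (since $h^0$ can be negative), but your parenthetical makes clear what you actually use, namely $h_\eps=h+\eps h^0\ge\sum_i\pi_i-\eps n$, which is the right lower bound.
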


The Gagliardo--Nirenberg inequality for $p=1+2/d$ and $\theta=d/(2+d)$ gives
\begin{align}\label{3.GN}
  \|u_i^{(\tau)}\|_{L^p(Q_T)}^p
	&= \int_0^T\|(u_i^{(\tau)})^{1/2}\|_{L^{2p}(\Omega)}^{2p} dt
	\le C\int_0^T\|(u_i^{(\tau)})^{1/2}\|_{H^1(\Omega)}^{2p\theta}
	\|(u_i^{(\tau)})^{1/2}\|_{L^2(\Omega)}^{2p(1-\theta)}dt \\
	&\le C \|(u_i^{(\tau)}\|_{L^\infty(0,T;L^1(\Omega)}^{p(1-\theta)}
	\int_0^T\|(u_i^{(\tau)})^{1/2}\|_{H^1(\Omega)}^{2}dt \le C, \nonumber
\end{align}
since $2p\theta=2$. As we need at least a uniform estimate for $u_i^{(\tau)}$
in $L^{2+\eta}(Q_T)$ for $\eta>0$
to pass to the limit in \eqref{3.aux2}, the above $L^p(Q_T)$ bound
is not sufficient except for $d=1$. We need an additional estimate,
which is provided by the following lemma.

\begin{lemma}\label{lem.est2}
There exists a constant $C>0$, which is independent of $\eps$ and $\tau$, such that
$$
  \|u_i^{(\tau)}\|_{L^3(Q_T)} + \eps^{1/4}\|u_i^{(\tau)}\|_{L^4(Q_T)} \le C,
	\quad i=1,\ldots,n.
$$
\end{lemma}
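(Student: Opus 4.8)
\textit{Plan.} The plan is to apply the duality (Pierre) method to the very weak formulation \eqref{3.aux2}. Observe first that \eqref{3.uk} has the structure $\pa_t u_i=\Delta f_i^{(\tau)}$ with the \emph{nonnegative} potential
$$f_i^{(\tau)}=\tfrac{\eps}{3}\tfrac{\mu_i}{\pi_i}(u_i^{(\tau)})^3+u_i^{(\tau)}p_i(u^{(\tau)})\ge 0,$$
and that choosing $\phi_j=\delta_{ij}$ in \eqref{3.uk} yields mass conservation, $\int_\Omega u_i^k\,dx=\int_\Omega u_{\eps,i}^0\,dx=:|\Omega|\bar u_i$, where $\bar u_i$ is bounded uniformly in $\eps$ and $\tau$ because $u_\eps^0\to u^0$ strongly in $L^2(\Omega)$. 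Let $\mathcal N$ denote the inverse Neumann Laplacian on zero-mean functions: for $v\in L^2(\Omega)$ with $\int_\Omega v\,dx=0$, let $\mathcal N v$ be the unique solution of $-\Delta(\mathcal N v)=v$ in $\Omega$, $\na(\mathcal N v)\cdot\nu=0$ on $\pa\Omega$, $\int_\Omega\mathcal N v\,dx=0$. Here the assumption $\pa\Omega\in C^2$ enters: $H^2$ elliptic regularity gives $\|\mathcal N v\|_{H^2(\Omega)}\le C\|v\|_{L^2(\Omega)}$. Since for \emph{fixed} $\eps>0$ and $\tau>0$ we already know $u_i^{(\tau)}\in L^2(0,T;H^1(\Omega))$ from Lemma \ref{lem.est1}, the function $\phi_i(t):=\mathcal N(u_i^{(\tau)}(t)-\bar u_i)$ is an admissible test function in $L^2(0,T;H^2_\nu(\Omega))$.

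Next I would insert $\phi_j=\delta_{ij}\,\mathcal N(u_i^{(\tau)}-\bar u_i)$ into \eqref{3.aux2}. Since $\Delta\phi_i=-(u_i^{(\tau)}-\bar u_i)$, the right-hand side becomes $-\int_0^T\!\!\int_\Omega f_i^{(\tau)}(u_i^{(\tau)}-\bar u_i)\,dx\,dt$. For the left-hand side I use that everything is piecewise constant in time, so it equals $\sum_{k=1}^N\int_\Omega(u_i^k-u_i^{k-1})\,\mathcal N(u_i^k-\bar u_i)\,dx$. Setting $v^k:=u_i^k-\bar u_i$ (which has zero mean) and introducing the symmetric nonnegative bilinear form $\langle a,b\rangle:=\int_\Omega a\,\mathcal N b\,dx=\int_\Omega\na\mathcal N a\cdot\na\mathcal N b\,dx$, the elementary convexity inequality $\langle v^k-v^{k-1},v^k\rangle\ge\tfrac12(\langle v^k,v^k\rangle-\langle v^{k-1},v^{k-1}\rangle)$ and a telescoping sum bound the left-hand side from below by $-\tfrac12\langle v^0,v^0\rangle\ge-C\|u_\eps^0\|_{L^2(\Omega)}^2\ge-C$, uniformly in $\eps$ and $\tau$. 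This yields the key inequality $\int_0^T\!\!\int_\Omega f_i^{(\tau)}(u_i^{(\tau)}-\bar u_i)\,dx\,dt\le C$.

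It remains to convert this into the asserted bounds. Writing $f_i^{(\tau)}(u_i^{(\tau)}-\bar u_i)=\big(\tfrac{\eps}{3}\tfrac{\mu_i}{\pi_i}(u_i^{(\tau)})^4+(u_i^{(\tau)})^2p_i(u^{(\tau)})\big)-\bar u_i\big(\tfrac{\eps}{3}\tfrac{\mu_i}{\pi_i}(u_i^{(\tau)})^3+u_i^{(\tau)}p_i(u^{(\tau)})\big)$, the point is that the subtracted term, not controlled on its own, can be absorbed by Young's inequality: $\bar u_i(u_i^{(\tau)})^3\le\tfrac38(u_i^{(\tau)})^4+C$, and $\bar u_i u_i^{(\tau)}u_k^{(\tau)}\le\tfrac12(u_i^{(\tau)})^2u_k^{(\tau)}+C\,u_k^{(\tau)}$, whence $\bar u_i u_i^{(\tau)}p_i(u^{(\tau)})\le\tfrac12(u_i^{(\tau)})^2p_i(u^{(\tau)})$ plus linear terms in the $u_j^{(\tau)}$, which integrate to constants since $\int_0^T\!\!\int_\Omega u_j^{(\tau)}\,dx\,dt=|\Omega|T\bar u_j\le C$ (here we also use $\sum_k a_{ik}(u_i^{(\tau)})^2u_k^{(\tau)}\le(u_i^{(\tau)})^2p_i(u^{(\tau)})$). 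Plugging in and absorbing the lower-order multiples of $(u_i^{(\tau)})^4$ and $(u_i^{(\tau)})^2p_i(u^{(\tau)})$ into the left-hand side gives $\int_0^T\!\!\int_\Omega(u_i^{(\tau)})^2p_i(u^{(\tau)})\,dx\,dt\le C$ and $\eps\int_0^T\!\!\int_\Omega(u_i^{(\tau)})^4\,dx\,dt\le C$, uniformly. Since $(u_i^{(\tau)})^2p_i(u^{(\tau)})\ge a_{ii}(u_i^{(\tau)})^3$ and $a_{ii}>0$ (as $8\pi_i a_{ii}>\sum_{j\ne i}\pi_j a_{ji}\ge0$ by Assumption (A3)), the first bound yields $\|u_i^{(\tau)}\|_{L^3(Q_T)}\le C$ and the second yields $\eps^{1/4}\|u_i^{(\tau)}\|_{L^4(Q_T)}\le C$.

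I expect the main obstacle to be exactly this last absorption step: the raw duality estimate only controls $\int_0^T\!\!\int_\Omega f_i^{(\tau)}(u_i^{(\tau)}-\bar u_i)$, and passing from there to a bound on $\int_0^T\!\!\int_\Omega(u_i^{(\tau)})^2p_i(u^{(\tau)})$ requires handling the a priori uncontrolled mean term $\bar u_i\int_0^T\!\!\int_\Omega f_i^{(\tau)}$; it is precisely the superquadratic growth of $u_i^{(\tau)}f_i^{(\tau)}$ (the $(u_i^{(\tau)})^4$ and $(u_i^{(\tau)})^3$ pieces) together with $a_{ii}>0$ that makes the Young absorption possible. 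A secondary technical point is making the discrete-in-time duality computation rigorous and checking admissibility of the test function, which is where $\pa\Omega\in C^2$ and the (fixed-$\eps$) $H^1$-bound from Lemma \ref{lem.est1} are used.
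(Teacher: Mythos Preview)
Your proposal is correct and follows essentially the same route as the paper: test \eqref{3.uk} with the inverse Neumann Laplacian of $u_i^k-\fint_\Omega u_i^k\,dx$, use the discrete convexity/telescoping inequality for the time term, then absorb the mean-value contribution via Young's inequality and conclude from $(u_i^{(\tau)})^2p_i(u^{(\tau)})\ge a_{ii}(u_i^{(\tau)})^3$ with $a_{ii}>0$. The only cosmetic difference is that you invoke exact mass conservation at the level of \eqref{3.uk} (which indeed holds after the $\delta\to 0$ limit), whereas the paper phrases the same step using the $L^\infty(0,T;L^1(\Omega))$ bound from Lemma~\ref{lem.est1}.
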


\begin{proof}
We use the duality method. For this, let $\psi_i^k\in\{\psi\in H^2_\nu(\Omega):
\int_\Omega\psi dx=0\}$ be the unique solution to
\begin{equation}\label{3.psi}
  -\Delta\psi_i^k = u_i^k - \fint_\Omega u_i^k dx\quad\mbox{in }\Omega, \quad
	\na\psi_i^k\cdot\nu = 0 \quad\mbox{on }\pa\Omega,
\end{equation}
where $\fint u_i^k dx = \operatorname{meas}(\Omega)^{-1}\int_\Omega u_i^k dx$.
This problem is well-posed since $u_i^k\in L^2(\Omega)$ and $\pa\Omega\in C^2$.
We use $\psi_i^k$ as a test function in the weak formulation \eqref{3.psi}:
\begin{align*}
  \int_\Omega|\na\psi_i^k|^2 dx &= \int_\Omega u_i^k\psi_i^k dx
	- \fint_\Omega u_i^k dx\int_\Omega\psi_i^k dx
	= \int_\Omega u_i^k\psi_i^k dx \\
	&\le \|u_i^k\|_{L^2(\Omega)}\|\psi_i^k\|_{L^2(\Omega)}
	\le C\|u_i^k\|_{L^2(\Omega)}\|\na\psi_i^k\|_{L^2(\Omega)},
\end{align*}
where we applied the Poincar\'e--Wirtinger inequality in the last step. Thus,
$\|\na\psi_i^k\|_{L^2(\Omega)}\le C\|u_i^k\|_{L^2(\Omega)}$ and,
by the Poincar\'e inequality again, $\|\psi_i^k\|_{L^2(\Omega)}\le
C\|u_i^k\|_{L^2(\Omega)}$. Hence,
$$
  \|\psi_i^k\|_{H^1(\Omega)}\le C\|u_i^k\|_{L^2(\Omega)}.
$$
%Applying elliptic regularity theory to $I-\Delta$ \cite[Theorem 9.26]{Bre11},
%we find that the solution $\psi_i^k$ to
%$$
%  (I-\Delta)\psi_i^k = u_i^k - \fint u_i^k dx + \psi_i^k\quad\mbox{in }\Omega, \quad
%	\na\psi_i^k\cdot\nu=0\quad\mbox{on }\pa\Omega,
%$$
%satisfies for some constant $C>0$,
%$$
%  \|\psi_i^k\|_{H^2(\Omega)}
%	\le C\big(\|u_i^k\|_{L^2(\Omega)} + \|\psi_î^k\|_{L^2(\Omega)}\big)
%	\le C\|u_i^k\|_{L^2(\Omega)}.
%$$

Now, taking $\phi_i=\psi_i^k$ and $\phi_j=0$ for $j\neq i$
as a test function in the weak formulation of \eqref{3.uk} and using
equation \eqref{3.psi} for $u_i^k$ and the property $\int_\Omega\psi_i^k dx=0$,
\begin{align*}
  -\frac{1}{\tau}&\int_\Omega(\Delta\psi_i^k-\Delta\psi_i^{k-1})\psi_i^k dx
	= \frac{1}{\tau}\int_\Omega(u_i^k-u_i^{k-1})\psi_i^k dx
	- \frac{1}{\tau}\fint_\Omega(u_i^k-u_i^{k-1})dx\int_\Omega\psi_i^k dx \\
	&= \int_\Omega\bigg(\frac{\eps}{3}\frac{\mu_i}{\pi_i}(u_i^k)^3
	+ u_i^k p_i(u^k)\bigg)\Delta\psi_i^k dx \\
	&= -\int_\Omega\bigg(\frac{\eps}{3}\frac{\mu_i}{\pi_i}(u_i^k)^3
	+ u_i^k p_i(u^k)\bigg)u_i^k dx
	+ \fint_\Omega u_i^k dx\int_\Omega\bigg(\frac{\eps}{3}\frac{\mu_i}{\pi_i}(u_i^k)^3
	+ u_i^k p_i(u^k)\bigg)dx.
\end{align*}
Summing this identity over $k=1,\ldots,N$ and observing that
\begin{align*}
  -\frac{1}{\tau}\int_\Omega(\Delta\psi_i^k-\Delta\psi_i^{k-1})\psi_i^k dx
	&= \frac{1}{\tau}\int_\Omega(|\na\psi_i^k|^2 - \na\psi_i^{k-1}\cdot\na\psi_i^k)dx \\
	&\ge \frac{1}{2\tau}\int_\Omega(|\na\psi_i^k|^2 - |\na\psi_i^{k-1}|^2)dx,
\end{align*}
we obtain
\begin{align}\label{3.aux3}
  \frac{1}{2}\int_\Omega(&|\na\psi_i^N|^2 - |\na\psi_i^0|^2)dx
	\le -\int_0^T\int_\Omega\bigg(\frac{\eps}{3}\frac{\mu_i}{\pi_i}(u_i^{(\tau)})^4
	+ (u_i^{(\tau)})^2 p_i(u^{(\tau)})\bigg)dxdt \\
	&{}+ \int_0^T\bigg(\fint_\Omega u_i^{(\tau)}dx\bigg)\int_\Omega
	\bigg(\frac{\eps}{3}\frac{\mu_i}{\pi_i}(u_i^{(\tau)})^3
	+ u_i^{(\tau)} p_i(u^{(\tau)})\bigg)dxdt. \nonumber
\end{align}
As $u_i^{(\tau)}$ is bounded in $L^\infty(0,T;L^1(\Omega))$ by Lemma \ref{lem.est1},
we can estimate the last term on the right-hand side by
\begin{align*}
  \int_0^T\bigg(&\fint_\Omega u_i^{(\tau)}dx\bigg)\int_\Omega
	\bigg(\frac{\eps}{3}\frac{\mu_i}{\pi_i}(u_i^{(\tau)})^3
	+ u_i^{(\tau)} p_i(u^{(\tau)})\bigg)dxdt \\
	&\le C(u^0)\int_0^T\int_\Omega\bigg(\frac{\eps}{3}\frac{\mu_i}{\pi_i}(u_i^{(\tau)})^3
	+ u_i^{(\tau)} p_i(u^{(\tau)})\bigg)dxdt.
\end{align*}
We deduce from Young's inequality $ab\le (\delta a)^p/p + (b/\delta)^q/q$ for
$a$, $b\ge 0$ and $1/p+1/q=1$ for suitable $\delta>0$ that
\begin{align*}
  C(u^0)\frac{\eps}{3}\frac{\mu_i}{\pi_i}(u_i^{(\tau)})^3
	&\le \frac{\eps}{6}\frac{\mu_i}{\pi_i}(u_i^{(\tau)})^4 + C_1, \\
	C(u^0) u_i^{(\tau)}p_i(u^{(\tau)}) &\le \frac12(u_i^{(\tau)})^2p_i(u^{(\tau)})
	+ C_2 p_i(u^{(\tau)}).
\end{align*}
The first terms on the right-hand sides can be absorbed by the first term on the
right-hand side of \eqref{3.aux3}, leading to
\begin{align*}
  \frac{1}{2}\int_\Omega(|\na\psi_i^N|^2 - |\na\psi_i^0|^2)dx
	&\le -\int_0^T\int_\Omega\bigg(\frac{\eps}{6}\frac{\mu_i}{\pi_i}(u_i^{(\tau)})^4
	+ \frac12(u_i^{(\tau)})^2 p_i(u^{(\tau)})\bigg)dx \\
	&\phantom{xx}{}+ \int_0^T\int_\Omega(C_1+C_2 p_i(u^{(\tau)}))dx.
\end{align*}
Since $p_i(u^{(\tau)})$ depends linearly on $u_i^{(\tau)}$ and this function
is uniformly bounded in $L^\infty(0,T;$ $L^1(\Omega))$, we conclude that
\begin{align*}
  \frac{1}{2}\int_\Omega&|\na\psi_i^N|^2 dx
	+ \int_0^T\int_\Omega\bigg(\frac{\eps}{6}\frac{\mu_i}{\pi_i}(u_i^{(\tau)})^4
	+ \frac12(u_i^{(\tau)})^2 p_i(u^{(\tau)})\bigg)dx \\
	&\le \frac{1}{2}\int_\Omega|\na\psi_i^0|^2dx + C_3(u^0)
	\le C\|u_{\eps,i}^0\|_{L^2(\Omega)}^2 + C_3(u^0) \le C(u^0).
\end{align*}
Taking into account the inequality
$(u_i^{(\tau)})^2p_i(u^{(\tau)})\ge a_{ii}(u_i^{(\tau)})^3$,
this finishes the proof.
\end{proof}

\begin{lemma}\label{lem.est3}
There exists a constant $C>0$, which is independent of $\eps$ and $\tau$, such that
$$
  \|u_i^{(\tau)}\|_{L^{3/2}(0,T;W^{1,3/2}(\Omega))} \le C.
$$
\end{lemma}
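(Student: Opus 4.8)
The plan is to bound the gradient $\na u_i^{(\tau)}$ in $L^{3/2}(Q_T)$ by interpolating between the two estimates already available: the bound for $\na(u_i^{(\tau)})^{1/2}$ in $L^2(Q_T)$ from Lemma \ref{lem.est1} and the bound for $u_i^{(\tau)}$ in $L^3(Q_T)$ from Lemma \ref{lem.est2}. The link between them is the chain rule $\na u_i^{(\tau)} = 2(u_i^{(\tau)})^{1/2}\na(u_i^{(\tau)})^{1/2}$, which is legitimate for fixed $t$ since $u_i^{(\tau)}(\cdot,t)$ is a strictly positive element of $H^1(\Omega)$ by construction of the approximate solutions.

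First I would fix a time slice and apply H\"older's inequality in $\Omega$ with the exponents $4$ and $4/3$: writing $|\na u_i^{(\tau)}|^{3/2} = 2^{3/2}(u_i^{(\tau)})^{3/4}|\na(u_i^{(\tau)})^{1/2}|^{3/2}$ and pairing the factor $(u_i^{(\tau)})^{3/4}$ (which lies in $L^4(\Omega)$ with $\|(u_i^{(\tau)})^{3/4}\|_{L^4(\Omega)} = \|u_i^{(\tau)}\|_{L^3(\Omega)}^{3/4}$) against $|\na(u_i^{(\tau)})^{1/2}|^{3/2}$ (which lies in $L^{4/3}(\Omega)$ with norm $\|\na(u_i^{(\tau)})^{1/2}\|_{L^2(\Omega)}^{3/2}$) gives
\[
  \|\na u_i^{(\tau)}(t)\|_{L^{3/2}(\Omega)}^{3/2}
  \le C\,\|u_i^{(\tau)}(t)\|_{L^3(\Omega)}^{3/4}\,
  \|\na(u_i^{(\tau)})^{1/2}(t)\|_{L^2(\Omega)}^{3/2}.
\]
Integrating over $(0,T)$ and applying H\"older's inequality in time with the same pair $(4,4/3)$ yields
\[
  \int_0^T\|\na u_i^{(\tau)}\|_{L^{3/2}(\Omega)}^{3/2}dt
  \le C\,\|u_i^{(\tau)}\|_{L^3(Q_T)}^{3/4}\,
  \|\na(u_i^{(\tau)})^{1/2}\|_{L^2(Q_T)}^{3/2} \le C,
\]
where the last inequality is Lemmas \ref{lem.est1} and \ref{lem.est2}. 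Since $Q_T$ has finite measure, the same $L^3(Q_T)$ bound controls $\|u_i^{(\tau)}\|_{L^{3/2}(Q_T)}$, and adding the two contributions gives $\|u_i^{(\tau)}\|_{L^{3/2}(0,T;W^{1,3/2}(\Omega))}\le C$, with $C$ independent of $\eps$ and $\tau$.

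There is essentially no obstacle in this argument; it is a routine interpolation between the ``$\sqrt{u}$ in $H^1$'' and ``$u$ in $L^3$'' bounds. The only point deserving a word of care is the chain rule used to rewrite $\na u_i^{(\tau)}$ in terms of $\na(u_i^{(\tau)})^{1/2}$, which is justified by the strict positivity and $H^1$-regularity of the approximate solutions noted above; this is also the reason the estimate is stated at the level of the time-discrete, $\delta$-regularized approximations $u_i^{(\tau)}$ rather than directly for the limit.
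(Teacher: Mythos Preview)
Your proof is correct and follows essentially the same approach as the paper: the paper also writes $\na u_i^{(\tau)} = 2(u_i^{(\tau)})^{1/2}\na(u_i^{(\tau)})^{1/2}$ and applies H\"older's inequality, pairing $(u_i^{(\tau)})^{1/2}\in L^6(Q_T)$ (equivalently $u_i^{(\tau)}\in L^3(Q_T)$ from Lemma~\ref{lem.est2}) with $\na(u_i^{(\tau)})^{1/2}\in L^2(Q_T)$ from Lemma~\ref{lem.est1}. The only cosmetic difference is that the paper applies H\"older once on the space--time cylinder $Q_T$, whereas you split it into a spatial step and a temporal step with the same exponents; the result is identical.
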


\begin{proof}
This estimate follows directly from Lemmas \ref{lem.est1}--\ref{lem.est2}
and H\"older's inequality:
\begin{align*}
  \|\na u_i^{(\tau)}\|_{L^{3/2}(Q_T)}
	&= 2\|(u_i^{(\tau)})^{1/2}\na(u_i^{(\tau)})^{1/2}\|_{L^{3/2}(Q_T)}, \\
	&\le 2\|u_i^{(\tau)}\|_{L^3(Q_T)}^{1/2}\|\na(u_i^{(\tau)})^{1/2}\|_{L^2(Q_T)} \le C,
\end{align*}
as well as the bound for $u_i^{(\tau)}$ in $L^3(Q_T)$ and also in $L^{3/2}(Q_T)$.
\end{proof}

The improved integrability of $u^{(\tau)}$ provides a uniform bound for
the discrete time derivative.

\begin{lemma}\label{lem.est4}
There exists a constant $C>0$, which is independent of $\eps$ and $\tau$, such that
$$
  \tau^{-1}\|u^{(\tau)}-\sigma_\tau u^{(\tau)}\|_{L^{4/3}(0,T;W_\nu^{2,4}(\Omega)')}
	\le C.
$$
\end{lemma}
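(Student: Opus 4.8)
The plan is to estimate the discrete time derivative by testing the very weak formulation \eqref{3.aux2} against an arbitrary $\phi \in L^4(0,T;W^{2,4}_\nu(\Omega;\R^n))$ and bounding the right-hand side using the $(\eps,\tau)$-uniform estimates already established. From \eqref{3.aux2}, for every such $\phi$,
\begin{align*}
  \frac{1}{\tau}\int_0^T\int_\Omega(u^{(\tau)}-\sigma_\tau u^{(\tau)})\cdot\phi\,dxdt
	= \sum_{i=1}^n\int_0^T\int_\Omega
	\bigg(\frac{\eps}{3}\frac{\mu_i}{\pi_i}(u_i^{(\tau)})^3
	+ u_i^{(\tau)} p_i(u^{(\tau)})\bigg)\Delta\phi_i\,dxdt.
\end{align*}
So it suffices to show that the two integrands $\eps (u_i^{(\tau)})^3$ and $u_i^{(\tau)} p_i(u^{(\tau)})$ are bounded, uniformly in $(\eps,\tau)$, in $L^{4/3}(Q_T)$, because then the right-hand side is bounded by $C\|\Delta\phi\|_{L^4(Q_T)} \le C\|\phi\|_{L^4(0,T;W^{2,4}_\nu(\Omega;\R^n))}$, and taking the supremum over $\phi$ in the unit ball gives exactly the asserted bound on $\tau^{-1}\|u^{(\tau)}-\sigma_\tau u^{(\tau)}\|_{L^{4/3}(0,T;W^{2,4}_\nu(\Omega)')}$.

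For the second integrand, $p_i(u^{(\tau)}) = a_{i0} + \sum_k a_{ik}u_k^{(\tau)}$ is affine in $u^{(\tau)}$, so $u_i^{(\tau)}p_i(u^{(\tau)})$ is a linear combination of $u_i^{(\tau)}$ and products $u_i^{(\tau)}u_k^{(\tau)}$. By Lemma \ref{lem.est2}, each $u_i^{(\tau)}$ is bounded in $L^3(Q_T)$; hence by Hölder each product $u_i^{(\tau)}u_k^{(\tau)}$ is bounded in $L^{3/2}(Q_T) \hookrightarrow L^{4/3}(Q_T)$, and the linear term is bounded in $L^3(Q_T) \hookrightarrow L^{4/3}(Q_T)$. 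For the first integrand, Lemma \ref{lem.est2} also gives $\eps^{1/4}\|u_i^{(\tau)}\|_{L^4(Q_T)}\le C$, so $\eps\|(u_i^{(\tau)})^3\|_{L^{4/3}(Q_T)} = \eps\|u_i^{(\tau)}\|_{L^4(Q_T)}^3 = \eps^{1/4}\big(\eps^{1/4}\|u_i^{(\tau)}\|_{L^4(Q_T)}\big)^3 \le C\eps^{1/4} \le C$ for $\eps \le 1$. Combining these, the right-hand side of \eqref{3.aux2} is bounded by $C\|\phi\|_{L^4(0,T;W^{2,4}_\nu(\Omega;\R^n))}$ uniformly in $(\eps,\tau)$, which completes the proof.

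The main point to be careful about is matching the integrability exponents so that the test space is exactly $W^{2,4}_\nu(\Omega)$: one needs the integrands in $L^{4/3}(Q_T)$ (the dual exponent of $4$ in space and time), and the $L^3(Q_T)$ bound of Lemma \ref{lem.est2} is just strong enough, since $u_i^{(\tau)}u_k^{(\tau)}\in L^{3/2}\subset L^{4/3}$. One should also note that the $\eps$-term does not spoil the uniformity precisely because the weighted bound $\eps^{1/4}\|u_i^{(\tau)}\|_{L^4(Q_T)}\le C$ leaves a spare factor $\eps^{1/4}$ after forming the cube. No genuine obstacle arises here; this lemma is a routine consequence of the already established $L^3(Q_T)$ and $\eps^{1/4}L^4(Q_T)$ estimates together with duality.
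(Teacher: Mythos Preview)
Your proof is correct and follows essentially the same approach as the paper's: both test \eqref{3.aux2} with $\phi\in L^4(0,T;W^{2,4}_\nu(\Omega;\R^n))$ and bound the two integrands via Lemma~\ref{lem.est2}, using the $L^3(Q_T)$ estimate for $u_i^{(\tau)}p_i(u^{(\tau)})$ and the $\eps^{1/4}L^4(Q_T)$ estimate for the cubic term. The only cosmetic difference is that the paper pairs $u_i^{(\tau)}p_i(u^{(\tau)})\in L^{3/2}(Q_T)$ directly with $\Delta\phi_i\in L^3(Q_T)$, whereas you first embed $L^{3/2}(Q_T)\hookrightarrow L^{4/3}(Q_T)$ and then pair with $\Delta\phi_i\in L^4(Q_T)$; either route gives the same bound.
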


\begin{proof}
Lemma \ref{lem.est2} shows that
$$
  \|u_i^{(\tau)}p_i(u^{(\tau)})\|_{L^{3/2}(Q_T)} \le C, \quad
  \eps\|(u_i^{(\tau)})^3\|_{L^{4/3}(Q_T)} = \eps\|u_i^{(\tau)}\|_{L^4(Q_T)}^3
	\le \eps^{1/4}C.
$$
Let $\phi\in L^4(0,T;W_\nu^{2,4}(\Omega;\R^n))$. Then, using \eqref{3.aux2}, we
can estimate as follows:
\begin{align*}
  \frac{1}{\tau}&\bigg|\int_0^T\int_\Omega(u^{(\tau)}-\sigma_\tau u^{(\tau)})
	\cdot\phi dxdt\bigg| \le \sum_{i=1}^n\|u_i^{(\tau)}p_i(u^{(\tau)})\|_{L^{3/2}(Q_T)}
	\|\Delta\phi_i\|_{L^3(Q_T)} \\
	&\phantom{xx}{}+\frac{\eps}{3}\sum_{i=1}^n\frac{\mu_i}{\pi_i}
	\|(u_i^{(\tau)})^3\|_{L^{4/3}(Q_T)}\|\Delta\phi_i\|_{L^4(Q_T)}
	\le C(1+\eps^{1/4})\|\phi\|_{L^4(0,T;W^{2,4}(\Omega))}.
\end{align*}
This finishes the proof.
\end{proof}

{\em Step 4: Limit $(\eps,\tau)\to 0$.}
Lemmas \ref{lem.est3} and \ref{lem.est4} allow us to apply the lemma of
Aubin--Lions in the version of \cite{DrJu12}, yielding the existence of a
subsequence of $(u^{(\tau)})$, which is not relabeled, such that,
as $(\eps,\tau)\to 0$,
$$
  u^{(\tau)}\to u\quad\mbox{strongly in }L^{3/2}(Q_T)\mbox{ and a.e.\ in }Q_T.
$$
It follows from Lemmas \ref{lem.est1} and \ref{lem.est4} that
\begin{align*}
  (u_i^{(\tau)})^{1/2}\rightharpoonup u_i^{1/2} &\quad\mbox{weakly in }
	L^2(0,T;H^1(\Omega)), \\
  \tau^{-1}(u^{(\tau)}-\sigma_\tau u^{(\tau)})
	\rightharpoonup \pa_t u &\quad\mbox{weakly in }
	L^{4/3}(0,T;W^{2,4}_\nu(\Omega)').
\end{align*}
The a.e.\ convergence of $(u^{(\tau)})$ implies that
$u_i^{(\tau)}p_i(u^{(\tau)})\to u_ip_i(u)$ a.e.\ in $Q_T$.
Since $u_i^{(\tau)}p_i(u^{(\tau)})$ is bounded in $L^{3/2}(Q_T)$, we infer that
$$
  u_i^{(\tau)}p_i(u^{(\tau)})\to u_ip_i(u)\quad\mbox{strongly in }L^{4/3}(Q_T).
$$
Furthermore, taking into account Lemma \ref{lem.est2}, as $\eps\to 0$,
$$
  \eps\|(u_i^{(\tau)})^3\|_{L^{4/3}(Q_T)}
	= \eps^{1/4}\big(\eps^{1/4}\|u_i^{(\tau)}\|_{L^4(Q_T)}\big)^3 \le C\eps^{1/4}
	\to 0.
$$
Thus, performing the limit $(\eps,\tau)\to 0$ in \eqref{3.aux2} shows that
$u$ solves \eqref{1.weak}. As $u_i\in W^{1,4/3}(0,T;$ $W^{2,4}_\nu(\Omega)')
\hookrightarrow C^0([0,T];W^{2,4}_\nu(\Omega)')$, the initial condition is
satisfied in the sense of $W_\nu^{2,4}(\Omega)'$.

\begin{remark}[One-dimensional case]\label{rem}\rm
The additional regularity from the duality method is not needed in the
one-dimensional case. In that case, the proof simplifies. First, we may
choose $\delta=\eps$. Second, the Gagliardo--Nirenberg inequality \eqref{3.GN}
shows that $u_i^{(\tau)}$ is uniformly bounded in $L^3(Q_T)$. Furthermore, by estimate
\eqref{3.nablaw}, $\sqrt{\eps}u_i^{(\tau)}$ is uniformly bounded in
$L^2(0,T;H^1(\Omega))$. Hence, using the Gagliardo--Nirenberg inequality
with $\theta=1/2$ and the uniform bounds in Lemma \ref{lem.est1},
\begin{align*}
  \eps \|u_i^{(\tau)}\|_{L^4(Q_T)}^4
	&\le \eps C\int_0^T\|u_i^{(\tau)}\|_{H^1(\Omega)}^{4\theta}
	\|u_i^{(\tau)}\|_{L^1(\Omega)}^{4(1-\theta)} dt \\
	&\le \eps C\|u_i^{(\tau)}\|_{L^\infty(0,T;L^1(\Omega))}^2
	\int_0^T\|u_i^{(\tau)}\|_{H^1(\Omega)}^2 dt \le C.
\end{align*}
This shows that $\eps^{1/4}u_i^{(\tau)}$ is uniformly bounded in $L^4(Q_T)$,
and we obtain the same estimates as in Lemma \ref{lem.est2}, which allow us
to conclude.
\qed
\end{remark}

%%%%%%%%%%%%%%%%%%%%%%%%%%%%%%%%%%%%%%%%%%%%%%%%%%%%%%%%%%%%%%%%%%%%%%%%%%%%%%%

\section{Large-time behavior}\label{sec.time}

In this section, we prove Theorem \ref{thm.time}.
First, we show an entropy inequality which gives time-uniform estimates.

\begin{lemma}[Entropy inequality I]\label{lem.ei}
It holds for all $t>0$ that
$$
  \int_\Omega h(u(t))dx + C\sum_{i=1}^n\int_0^t\int_\Omega|\na\sqrt{u_i}|^2 dx
	\le \int_\Omega h(u^0)dx.
$$
\end{lemma}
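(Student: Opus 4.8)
The plan is to derive the entropy inequality by passing to the limit in the discrete entropy inequality \eqref{3.deis}, which already encodes the crucial gradient bound coming from the positive definiteness estimate \eqref{3.nablaw}. First I would recall that for the approximate solutions $u^{(\tau)}$ we have, from summing \eqref{3.dei} and using \eqref{3.nablaw}, the inequality
\begin{equation*}
  \int_\Omega h_\eps(u^{(\tau)}(t))\,dx
  + C\sum_{i=1}^n\int_0^t\int_\Omega|\na\sqrt{u_i^{(\tau)}}|^2\,dx\,d\sigma
  \le \int_\Omega h_\eps(u^0_\eps)\,dx
\end{equation*}
for $t$ on the time grid, where $h_\eps=h+\eps h^0$ and $C$ depends only on $\kappa$ from \eqref{1.aii}. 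The right-hand side converges to $\int_\Omega h(u^0)\,dx$ because $h(u^0_\eps)\to h(u^0)$ strongly in $L^1(\Omega)$ by the construction of $u^0_\eps$, and $\eps\int_\Omega h^0(u^0_\eps)\,dx\to 0$ since $h^0(u^0_\eps)$ is controlled (the sequence $u^0_\eps$ is componentwise bounded with positive lower bounds, so $h^0(u^0_\eps)$ is bounded in $L^1(\Omega)$, whence the $\eps$-prefactor kills it).

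The main work is the lower semicontinuity of the left-hand side. For the gradient term, the bound in Lemma~\ref{lem.est1} gives $\sqrt{u_i^{(\tau)}}\rightharpoonup\sqrt{u_i}$ weakly in $L^2(0,t;H^1(\Omega))$, and since the a.e.\ convergence $u^{(\tau)}\to u$ (established in Step~4) identifies the weak limit, weak lower semicontinuity of the $L^2$-norm of the gradient yields
\begin{equation*}
  \int_0^t\int_\Omega|\na\sqrt{u_i}|^2\,dx\,d\sigma
  \le \liminf_{(\eps,\tau)\to 0}\int_0^t\int_\Omega|\na\sqrt{u_i^{(\tau)}}|^2\,dx\,d\sigma .
\end{equation*}
For the entropy term I would use that $h$ is convex and bounded below, so $u\mapsto\int_\Omega h(u)\,dx$ is weakly lower semicontinuous; combined with the a.e.\ convergence $u^{(\tau)}(t)\to u(t)$ along a subsequence of times and Fatou's lemma (after discarding the nonnegative $\eps h^0$ contribution, or keeping it since $h^0\ge -n$ on $(0,\infty)^n$), one gets $\int_\Omega h(u(t))\,dx\le\liminf\int_\Omega h_\eps(u^{(\tau)}(t))\,dx$. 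A technical point is that $u^{(\tau)}(t)$ is defined on the time grid while $u$ is continuous into $W^{2,4}_\nu(\Omega)'$ only; I would handle this by noting $t\mapsto\int_\Omega h(u(t))\,dx$ can be taken lower semicontinuous in $t$ (it is the limit of the nonincreasing discrete quantities), or by first proving the inequality for a.e.\ $t$ and then extending.

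The step I expect to be the main obstacle is the deregularization of the logarithmic part of $h$: since $h(u)=\sum_i\pi_i(u_i-\log u_i)$ contains $-\log u_i$, which is \emph{not} bounded below as $u_i\to 0$, the convexity/Fatou argument for the entropy term needs $u^{(\tau)}\to u$ a.e.\ together with the fact that the negative part $(\log u_i)^+$ is controlled by the $L^1$ bound on $u_i$, so the only danger is mass escaping to $u_i=0$, and there lower semicontinuity works \emph{in our favor} (the limit entropy can only be smaller, never larger, which is exactly the direction we need). So the $\log$ term is actually harmless for this inequality — it is the positivity $u_i(t)>0$ a.e.\ and the finiteness $\int_\Omega h(u(t))\,dx<\infty$, already asserted in Theorem~\ref{thm.ex}, that make the statement meaningful, and the proof reduces to assembling weak lower semicontinuity of the convex entropy, weak lower semicontinuity of the Dirichlet-type gradient term, and the strong convergence of the initial data.
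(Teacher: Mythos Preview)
Your proposal is correct and follows essentially the same approach as the paper's proof: start from the discrete entropy inequality \eqref{3.deis}, use $h^0\ge -n$ to replace $h_\eps$ by $h$ on the left-hand side (at the cost of a harmless $C\eps$), apply Fatou's lemma for the entropy term via the a.e.\ convergence $u^{(\tau)}\to u$, use weak lower semicontinuity of the $L^2$ norm for the gradient term, and pass to the limit in the initial data. Your discussion of the logarithmic term and the a.e.-in-$t$ subtlety is more explicit than the paper's, but the argument is the same.
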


\begin{proof}
We find from \eqref{3.deis} that
$$
  \int_\Omega h_\eps(u^{(\tau)}(t))dx + C\sum_{i=1}^n\int_0^t\int_\Omega
	|\na(u_i^{(\tau)})^{1/2}|^2dx \le \int_\Omega h_\eps(u^{(\tau)}(0))dx,
$$
where $t\in((j-1)\tau,j\tau]$.
Recalling that $h_\eps(u)=h(u)+\eps h^0(u)$ and $h^0(u)\ge -n$ (see \eqref{2.h0}),
it follows that
\begin{equation}\label{4.aux}
  \int_\Omega h(u^{(\tau)}(t))dx + C\sum_{i=1}^n\int_0^t\int_\Omega
	|\na(u_i^{(\tau)})^{1/2}|^2dx \le \int_\Omega h_\eps(u^0_\eps)dx + C\eps.
\end{equation}
Because of the a.e.\ convergence of $(u^{(\tau)})$, we have
$h(u^{(\tau)}(t))\to h(u(t))$ in $\Omega$ for a.e.\ $t\in(0,T)$, such that
Fatou's lemma implies that
$$
  \int_\Omega h(u(t))dx \le \liminf_{\tau\to 0}\int_\Omega h(u^{(\tau)}(t))dx
$$
for a.e.\ $t>0$. Then, using the weak lower semicontinuity of the $L^2(Q_T)$ norm,
we infer from \eqref{4.aux} in the limit $(\eps,\tau)\to 0$ the conclusion.
\end{proof}

The following lemma is a consequence of Lemma \ref{lem.ei}. Both Lemma \ref{lem.ei}
and \ref{lem.log} are valid in several space dimensions.

\begin{lemma}\label{lem.log}
There exists a constant $C>0$, only depending on $u^0$, such that
$$
  \|u_i\|_{L^\infty(0,\infty;L^1(\Omega))}
	+ \|\log u_i\|_{L^\infty(0,\infty;L^1(\Omega))} 
	+ \|\nabla \sqrt{u_i}\|_{L^2(0,\infty;L^2(\Omega))}\le C.
$$
\end{lemma}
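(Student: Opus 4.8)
The plan is to extract each of the three bounds directly from the entropy inequality of Lemma \ref{lem.ei} together with the elementary structure of the entropy density $h(u)=\sum_{i=1}^n\pi_i(u_i-\log u_i)$. The crucial point is that each summand $\pi_i(u_i-\log u_i)$ is bounded below by a positive constant (minimizing $s\mapsto s-\log s$ over $s>0$ gives the value $1$), so that $h(u)\ge\sum_i\pi_i$ pointwise; hence Lemma \ref{lem.ei} yields the uniform-in-time bound
$$
  \int_\Omega h(u(t))\,dx + C\sum_{i=1}^n\int_0^\infty\int_\Omega|\na\sqrt{u_i}|^2\,dx\,dt
  \le \int_\Omega h(u^0)\,dx =: C(u^0) < \infty,
$$
where finiteness of the right-hand side is Assumption (A2). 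The gradient bound $\|\na\sqrt{u_i}\|_{L^2(0,\infty;L^2(\Omega))}\le C$ is then immediate.

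Next I would split $\int_\Omega h(u(t))\,dx$ into its positive and negative parts to control the $L^1$ norms of $u_i$ and $\log u_i$. Writing $u_i-\log u_i = (u_i-\log u_i)_+ - (u_i-\log u_i)_-$ and noting that the function $s\mapsto s-\log s$ is bounded below (so its negative part is bounded by a constant on $\Omega$ of finite measure), one gets
$$
  \int_\Omega |u_i-\log u_i|\,dx
  \le \int_\Omega (u_i-\log u_i)\,dx + 2\operatorname{meas}(\Omega)\sup_{s>0}(\log s - s)^+
  \le \frac{1}{\pi_i}\int_\Omega h(u(t))\,dx + C \le C(u^0),
$$
using that all the $\pi_j>0$ in the sum defining $h$ and that each term is bounded below. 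To disentangle $\|u_i\|_{L^1}$ and $\|\log u_i\|_{L^1}$ from $\|u_i-\log u_i\|_{L^1}$, I would use the pointwise inequalities $u_i \le 2(u_i-\log u_i) + C$ (valid since $u_i + 2\log u_i \ge -C$, because $\log u_i \le u_i/2 + C$ for all $u_i>0$) and, similarly, $|\log u_i| = \log u_i^+ + (\log u_i)^- \le u_i + (u_i-\log u_i) + C$ after using $(\log u_i)^- \le (u_i - \log u_i)$ and $\log u_i^+ \le u_i$. Integrating these over $\Omega$ and invoking the bound on $\int_\Omega h(u(t))\,dx$ just obtained gives the claimed uniform-in-time bounds on $\|u_i\|_{L^\infty(0,\infty;L^1(\Omega))}$ and $\|\log u_i\|_{L^\infty(0,\infty;L^1(\Omega))}$.

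There is no serious obstacle here; the lemma is essentially a bookkeeping exercise once Lemma \ref{lem.ei} is available, the only subtlety being the elementary calculus inequalities relating $u_i$, $\log u_i$, and $u_i-\log u_i$ that are needed to separate the $L^1$ contributions — and these hold uniformly because $\Omega$ has finite measure. I would also remark, as the surrounding text does, that the entire argument is dimension-independent: Lemma \ref{lem.ei} holds in any space dimension $d\le 3$, and the estimates above use only $\operatorname{meas}(\Omega)<\infty$ and the sign structure of the logarithmic entropy, so Lemma \ref{lem.log} carries over verbatim.
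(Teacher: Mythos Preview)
Your proof is correct and follows essentially the same approach as the paper: both derive all three bounds from Lemma~\ref{lem.ei} together with elementary pointwise inequalities for $s\mapsto s-\log s$. The paper's version is slightly cleaner, using directly $z-\log z\ge|\log z|$ and $z-\log z\ge z/2$ for $z>0$ (both hold without additive constants), which immediately give $\|\log u_i\|_{L^1}\le\pi_i^{-1}\int_\Omega h(u^0)\,dx$ and $\|u_i\|_{L^1}\le 2\pi_i^{-1}\int_\Omega h(u^0)\,dx$ and spare you the positive/negative part detour.
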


\begin{proof}
The elementary inequalities $z-\log z\ge|\log z|$ and $z-\log z\ge z/2$ for $z>0$,
together with Lemma \ref{lem.ei}, imply that
$$
  \|\log u_i\|_{L^\infty(0,\infty;L^1(\Omega))} 
	\le \frac{1}{\pi_i}\int_\Omega h(u^0)dx, \quad
	\|u_i\|_{L^\infty(0,\infty;L^1(\Omega))} \le \frac{2}{\pi_i}\int_\Omega h(u^0)dx.
$$
The bound on $\na\sqrt{u_i}$ is a consequence of Lemma \ref{lem.ei}.
\end{proof}

It is essential to use the entropy at time $s=0$ in Lemma \ref{lem.ei}
because it is unclear how to pass to limit $(\tau,\eps)\to 0$ in
the entropy at time $s>0$, as $u_i(s)$ may vanish on a set of zero measure.
We overcome this issue by using the test function 
$$
  \bigg(\pi_i\bigg(1-\frac{1}{u_i^k+\eta}\bigg)+\eps\log(u_i^k+\eta)
	\bigg)_{i=1,\ldots,n}
$$
for $\eta>0$ in \eqref{3.eps}. This means that we need to estimate the matrix
$H_\eps(u+\eta)A_\eps(u)$, where $u+\eta=(u_1+\eta,\ldots,u_n+\eta)$,
similarly as we estimated $H_\eps(u)A_\eps(u)$ in Lemma \ref{lem.HAeps}.
This is done in the following lemma.

\begin{lemma}\label{lem.HAetaeps}
There exists $\eta_0>0$ such that for all $0<\eta\le\eta_0$,
$u\in(0,\infty)^n$, and $z\in\R^n$,
it holds that
$$
  z^T H_\eps(u+\eta)A_\eps(u)z \ge \frac{\kappa}{4}\sum_{i=1}^n\frac{z_i^2}{u_i+\eta}
	- \eta\eps C_1\sum_{i=1}^n\frac{z_i^2}{u_i+\eta}
	- \eta\eps^2 C_2\sum_{i=1}^n z_i^2,
$$
where $C_1>0$ depends on $(a_{ij})$, $(\mu_i)$ and $C_2>0$ depends on $(\mu_i/\pi_i)$.
\end{lemma}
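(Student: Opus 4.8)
The plan is to expand the form in powers of $\eps$ — using that $H(u+\eta)$, $H^0(u+\eta)$ and $A^0(u)$ are all diagonal — as
$$ z^T H_\eps(u+\eta)A_\eps(u)z = z^T H(u+\eta)A(u)z + \eps\,z^T\!\big(H^0(u+\eta)A(u)+H(u+\eta)A^0(u)\big)z + \eps^2\sum_{i=1}^n\frac{\mu_i u_i^2}{\pi_i(u_i+\eta)}z_i^2, $$
and to estimate the three pieces separately. The $\eps^2$-piece is nonnegative. For the $\eps$-piece I would argue as in the proof of Lemma~\ref{lem.HAeps}: its off-diagonal part $\sum_{i\ne j}a_{ij}u_i(u_i+\eta)^{-1}z_iz_j\ge-\tfrac12\sum_i z_i^2\sum_{j\ne i}(a_{ij}+a_{ji})$ by $u_i\le u_i+\eta$ and Young's inequality, its diagonal part contains the nonnegative quantities $a_{i0}(u_i+\eta)^{-1}$, $2a_{ii}u_i(u_i+\eta)^{-1}$, $(u_i+\eta)^{-1}\sum_{k\ne i}a_{ik}u_k$ together with $\mu_i u_i^2(u_i+\eta)^{-2}$, and since $\mu_i\ge\tfrac12\sum_{j\ne i}(a_{ij}+a_{ji})$ what survives is $\mu_i\big(u_i^2(u_i+\eta)^{-2}-1\big)z_i^2=-\mu_i\eta(2u_i+\eta)(u_i+\eta)^{-2}z_i^2\ge-2\mu_i\eta(u_i+\eta)^{-1}z_i^2$. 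Thus the $\eps$-piece is $\ge-2\eta\sum_i\mu_i(u_i+\eta)^{-1}z_i^2$.

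The heart of the matter is the $\eps^0$-piece. Repeating the computation of the opening lemma of Section~\ref{sec.aux} with $u_i^{-2}$ replaced by $(u_i+\eta)^{-2}$ in $H$ only gives
$$ z^T H(u+\eta)A(u)z = \sum_i\frac{\pi_i a_{i0}}{(u_i+\eta)^2}z_i^2 + 2\sum_i\frac{\pi_i a_{ii}u_i}{(u_i+\eta)^2}z_i^2 + \sum_{i\ne j}\frac{\pi_i a_{ij}u_j}{(u_i+\eta)^2}z_i^2 + \sum_{i\ne j}\frac{\pi_i a_{ij}u_i}{(u_i+\eta)^2}z_iz_j. $$
The third sum is nonnegative; the last one I would estimate \emph{not} by the obvious (tight) Young inequality — which cancels the third sum but leaves a coefficient $\asymp u_i^{-1}\sum_{j\ne i}\pi_j a_{ji}$ of $z_i^2$ that blows up as $u_i\to0$ — but by the asymmetric choice $\frac{\pi_i a_{ij}u_i}{(u_i+\eta)^2}|z_iz_j|\le\frac{\pi_i a_{ij}u_i^2(u_j+\eta)}{(u_i+\eta)^4}z_i^2+\frac{\pi_i a_{ij}}{4(u_j+\eta)}z_j^2$. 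Since $u_i^2\le(u_i+\eta)^2$, the first coefficient is $\le\frac{\pi_i a_{ij}u_j}{(u_i+\eta)^2}+\frac{\eta\pi_i a_{ij}}{(u_i+\eta)^2}$, so it is absorbed by the third sum at the price of an $O(\eta)$-term carrying the weight $(u_i+\eta)^{-2}$, while the $z_j^2$-coefficients sum (after relabeling $i\leftrightarrow j$) to exactly $\tfrac14\sum_i(u_i+\eta)^{-1}z_i^2\sum_{j\ne i}\pi_j a_{ji}$. Using $2\pi_i a_{ii}u_i(u_i+\eta)^{-2}=2\pi_i a_{ii}(u_i+\eta)^{-1}-2\pi_i a_{ii}\eta(u_i+\eta)^{-2}$ one then collects
$$ z^T H(u+\eta)A(u)z\ge\sum_i\frac{8\pi_i a_{ii}-\sum_{j\ne i}\pi_j a_{ji}}{4(u_i+\eta)}z_i^2+\sum_i\frac{\pi_i\big(a_{i0}-\eta(2a_{ii}+\sum_{j\ne i}a_{ij})\big)}{(u_i+\eta)^2}z_i^2. $$
With $\eta_0:=\min_i a_{i0}/(2a_{ii}+\sum_{j\ne i}a_{ij})$, which is strictly positive because $a_{i0}>0$ in this section and $a_{ii}>0$ by \eqref{1.aii}, the second sum is $\ge0$ whenever $\eta\le\eta_0$, and \eqref{1.aii} bounds the first sum from below by $\tfrac{\kappa}{4}\sum_i(u_i+\eta)^{-1}z_i^2$.

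Adding the three contributions yields the asserted inequality with $C_1$ a multiple of $\max_i\mu_i$; since the $\eps^2$-contribution is itself nonnegative, the $O(\eta\eps^2)$-term in the statement is harmless slack, and it is recovered (with $C_2$ a multiple of $\max_i\mu_i/\pi_i$) if instead of the direct expansion one uses the splitting $A_\eps(u)=A_\eps(u+\eta)-\eta D-\eps\eta\,\mathrm{diag}(\tfrac{\mu_i}{\pi_i}(2u_i+\eta))$ together with Lemma~\ref{lem.HAeps}. The one genuine difficulty is exactly the off-diagonal sum in the $\eps^0$-piece: it is the mismatch between $u+\eta$ in $H_\eps$ and $u$ in $A_\eps$ that pushes this sum out of the ``perfect'' cancellation structure of the opening lemma of Section~\ref{sec.aux}, and the asymmetric Young split is what confines the uncontrollable part to the harmless denominator $u_j+\eta$ and leaves a remainder small of order $\eta$ with the absorbable weight $(u_i+\eta)^{-2}$ — which is precisely why the positivity of $a_{i0}$, not used in Section~\ref{sec.aux}, is essential here.
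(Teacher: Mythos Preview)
Your proof is correct and reaches the same conclusion with the same threshold $\eta_0=\min_i a_{i0}/(2a_{ii}+\sum_{j\ne i}a_{ij})$, but the organisation differs from the paper's. The paper does not expand $H_\eps(u+\eta)A_\eps(u)$ directly in powers of $\eps$; instead it first rewrites $A_\eps(u)$ by adding and subtracting a \emph{diagonal} matrix $\eta A^1$ (with $A^1_{ij}=\delta_{ij}(\sum_k a_{ik}+a_{ii})$) and splitting $A^0(u)=A^0(u+\eta)-A^2(u)$, so that the main pieces $K^1=H(u+\eta)(A(u)+\eta A^1)$, $K^2$, $K^3$ carry $u+\eta$ throughout and the correction terms $K^4$, $K^5$ collect the $\eta$-defects. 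In $K^1$ the third sum already has numerator $u_j+\eta$, so the crude Young split $\pi_i a_{ij}(u_i+\eta)^{-1}|z_iz_j|\le \pi_i a_{ij}(u_j+\eta)(u_i+\eta)^{-2}z_i^2+\tfrac14\pi_i a_{ij}(u_j+\eta)^{-1}z_j^2$ cancels it exactly. You avoid the add–subtract by working with $H(u+\eta)A(u)$ directly and using the sharper split $\pi_i a_{ij}u_i(u_i+\eta)^{-2}|z_iz_j|\le \pi_i a_{ij}u_i^2(u_j+\eta)(u_i+\eta)^{-4}z_i^2+\tfrac14\pi_i a_{ij}(u_j+\eta)^{-1}z_j^2$, which after $u_i^2\le(u_i+\eta)^2$ produces the same $O(\eta)(u_i+\eta)^{-2}$ remainder that the paper extracts from $K^4$. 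The payoff of your route is a slightly cleaner bookkeeping and in fact a stronger bound: your $\eps$-piece gives $C_1=2\max_i\mu_i$ rather than the paper's $C_1=2\max_i(\sum_j a_{ij}+\mu_i)$, and your $\eps^2$-piece is nonnegative so no $\eta\eps^2 C_2$ term is actually needed, as you note. Your closing remark about the alternative splitting is essentially the paper's approach, though note the paper's $A^1$ is diagonal, not the full $A(u+\eta)-A(u)$.
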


\begin{proof}
We decompose the matrix $A_\eps(u)=A(u)+\eps A^0(u)$ as follows:
\begin{align*}
  & A_\eps(u) = A_{\eps,\eta}(u) - \eta A^1, \quad\mbox{where }
	A_{\eps,\eta}(u) := A(u) + \eta A^1 + \eps A^0(u+\eta) - \eps A^2(u), \\
  & A^1_{ij} := \delta_{ij}\bigg(\sum_{k=1}^n a_{ik} + a_{ij}\bigg), \quad
	A^2_{ij}(u) := \delta_{ij}\eta\mu_i\pi_i^{-1}(2u_i+\eta).
\end{align*}
Note that we have written the matrix $A^0(u)$ as $A^0(u) = A^0(u+\eta)-A^2(u)$
and that we have added and subtracted the matrix $\eta A^1$.
We wish to estimate
\begin{align}\label{4.HepsAeps}
  H_\eps(u+\eta)A_\eps(u) &= \big(H(u+\eta)+\eps H^0(u+\eta)\big)
	\big((A(u) + \eta A^1) + \eps A^0(u+\eta) - \eps A^2(u)\big) \\
	&\phantom{xx}{}- \eta\big(H(u+\eta)+\eps H^0(u+\eta)\big)A^1	
	=: K^1+\ldots+K^5, \nonumber
\end{align}
where
\begin{align*}
	K^1 &= H(u+\eta)(A(u)+\eta A^1), \\
	K^2 &= \eps H^0(u+\eta)(A(u)+\eta A^1) + \eps H(u+\eta)A^0(u+\eta), \\
	K^3 &= \eps^2 H^0(u+\eta)A^0(u+\eta), \\
	K^4 &= -\eta\big(H(u+\eta)+\eps H^0(u+\eta)\big)A^1, \\
	K^5 &= -\eps\big(H(u+\eta) + \eps H^0(u+\eta)\big)A^2(u).
\end{align*}
In the following, let $z\in\R^n$ be fixed.

{\em Step 1: Estimate of $z^TK^1z$.} Since
$H_{ij}(u+\eta)=\delta_{ij}\pi_i(u_i+\eta)^{-2}$ and
\begin{align*}
  A_{ij}(u)+\eta A^1_{ij}
	&= \delta_{ij}a_{i0} + \delta_{ij}\sum_{k=1}^n a_{ik}(u_k+\eta)
	+ a_{ij}(u_i+\eta\delta_{ij}) \\
	&= \delta_{ij}a_{i0} + \delta_{ij}\sum_{k=1}^n a_{ik}(u_k+\eta)
	+ \delta_{ij}a_{ij}(u_i+\eta) + a_{ij}(1-\delta_{ij})u_i,
\end{align*}
we obtain
$$
  K^1_{ij} = \delta_{ij}\frac{\pi_i a_{i0}}{(u_i+\eta)^2}
	+ \delta_{ij}\sum_{k=1}^n \pi_i a_{ik}\frac{u_k+\eta}{(u_i+\eta)^2}
	+ \delta_{ij}\frac{\pi_i a_{ij}}{u_i+\eta}
	+ (1-\delta_{ij})\frac{\pi_i a_{ij}u_i}{(u_i+\eta)^2}
$$
and
\begin{align}\label{4.K1}
  z^T K^1 z &= \sum_{i=1}^n\frac{\pi_i a_{i0}}{(u_i+\eta)^2} z_i^2
	+ 2\sum_{i=1}^n\frac{\pi_i a_{ii}}{u_i+\eta}z_i^2
	+ \sum_{i,k=1,\,i\neq k}^n \pi_i a_{ik}\frac{u_k+\eta}{(u_i+\eta)^2}z_i^2 \\
	&\phantom{xx}{}+ \sum_{i,j=1,\,i\neq j}^n \frac{\pi_i a_{ij}u_i}{(u_i+\eta)^2}z_iz_j.
	\nonumber
\end{align}
We estimate the last term by Young's inequality:
\begin{align*}
  \sum_{i,j=1,\,i\neq j}^n &\frac{\pi_i a_{ij}u_i}{(u_i+\eta)^2}z_iz_j
	\ge -\sum_{i,j=1,\,i\neq j}^n \frac{\pi_i a_{ij}}{u_i+\eta}|z_iz_j| \\
	&\ge -\sum_{i,j=1,\,i\neq j}^n \pi_i a_{ij}\frac{u_j+\eta}{(u_i+\eta)^2}z_i^2
	- \frac14\sum_{i,j=1,\,i\neq j}^n\frac{\pi_i a_{ij}}{u_j+\eta}z_j^2 \\
	&= -\sum_{i,j=1,\,i\neq j}^n \pi_i a_{ij}\frac{u_j+\eta}{(u_i+\eta)^2}z_i^2
	- \frac14\sum_{i,j=1,\,i\neq j}^n\frac{\pi_j a_{ji}}{u_i+\eta}z_i^2.
\end{align*}
The first term on the right-hand side cancels with the third term on the right-hand
side of \eqref{4.K1}. Therefore,
\begin{align*}
  z^TK^1z &\ge \sum_{i=1}^n\frac{\pi_i a_{i0}}{(u_i+\eta)^2} z_i^2
	+ \frac14\sum_{i=1}^n\bigg(8\pi_i a_{ii} - \sum_{j=1,\,j\neq i}^n\pi_j a_{ji}\bigg)
	\frac{z_i^2}{u_i+\eta} \\
  &\ge \sum_{i=1}^n\frac{\pi_i a_{i0}}{(u_i+\eta)^2} z_i^2
	+ \frac{\kappa}{4}\sum_{i=1}^n\frac{z_i^2}{u_i+\eta},
\end{align*}
where $\kappa$ is defined in \eqref{1.aii}.

{\em Step 2: Estimate of $z^TK^2z$.} It follows from
$$
  \eps^{-1}K^2_{ij} = \delta_{ij}\frac{a_{i0}}{u_i+\eta}
	+ \delta_{ij}\sum_{k=1}^n a_{ik}\frac{u_k+\eta}{u_i+\eta} + \delta_{ij}a_{ij}
	+ (1-\delta_{ij})\frac{a_{ij}u_i}{u_i+\eta} + \delta_{ij}\mu_i
$$
that
\begin{align*}
  z^T(\eps^{-1}K^2)z 
	&= \sum_{i=1}^n\bigg(\frac{a_{i0}}{u_i+\eta} + \sum_{k=1,\,k\neq i}^n
  a_{ik}\frac{u_k+\eta}{u_i+\eta} + 2a_{ii} + \mu_i\bigg)z_i^2
	+ \sum_{i,j=1}^n(1-\delta_{ij})\frac{a_{ij}u_i}{u_i+\eta}z_iz_j.
\end{align*}
Using Young's inequality $z_iz_j\ge -(z_i^2+z_j^2)/2$ and taking into account our
choice of $\mu_i$ in Section \ref{sec.aux}, we find that
\begin{align*}
  \sum_{i=1}^n& \mu_i z_i^2
	+ \sum_{i,j=1}^n(1-\delta_{ij})\frac{a_{ij}u_i}{u_i+\eta}z_iz_j
	\ge \sum_{i=1}^n\mu_i z_i^2
	- \frac12\sum_{i=1}^n\sum_{j=1,\,j\neq i}^n a_{ij}(z_i^2+z_j^2) \\
	&\ge \sum_{i=1}^n\bigg(\mu_i
	- \frac12\sum_{j=1,\,j\neq i}^n(a_{ij}+a_{ji})\bigg)z_i^2
	\ge 0.
\end{align*}
This shows that
$$
  z^T K^2 z \ge 2\eps\sum_{i=1}^n a_{ii}z_i^2 \ge 0.
$$

{\em Step 3: Computation of $K^3$, $K^4$, and $K^5$.}
The definitions of the matrices yield
\begin{align*}
  z^T K^3z &= \eps^2\sum_{i=1}^n \frac{\mu_i}{\pi_i}(u_i+\eta)z_i^2 \ge 0, \\
  z^T K^4z &= -\eta\sum_{i=1}^n\bigg(\sum_{k=1}^n a_{ik}+a_{ii}\bigg)
	\bigg(\frac{\pi_i}{(u_i+\eta)^2} + \frac{\eps}{u_i+\eta}\bigg)z_i^2 \\
	&= -\eta\sum_{i,j=1}^n\pi_i a_{ij}\frac{(1+\delta_{ij})z_i^2}{(u_i+\eta)^2}
	- \eta\eps\sum_{i,j=1}^n a_{ij}\frac{(1+\delta_{ij})z_i^2}{u_i+\eta}, \\
  z^T K^5z &= -\eps\eta\sum_{i=1}^n\frac{\mu_i}{\pi_i}
	\bigg(\pi_i\frac{2u_i+\eta}{(u_i+\eta)^2}
	+ \eps\frac{2u_i+\eta}{u_i+\eta}\bigg)z_i^2 \\
	&\ge -2\eta\eps\sum_{i=1}^n\mu_i
	\bigg(\frac{1}{u_i+\eta}+\frac{\eps}{\pi_i}\bigg)z_i^2.
\end{align*}

{\em Step 4: End of the proof.} We insert the estimates for $K^1,\ldots,K^5$
into \eqref{4.HepsAeps}:
\begin{align*}
  z^T & H_\eps(u+\eta)A_\eps(u+\eta)z
	\ge \sum_{i=1}^n \pi_i\bigg(a_{i0} - \eta \sum_{j=1}^n a_{ij}(1+\delta_{ij})\bigg)
	\frac{z_i^2}{(u_i+\eta)^2} \\
	&\phantom{xx}{}+ \frac{\kappa}{4}\sum_{i=1}^n\frac{z_i^2}{u_i+\eta}
	- \eta\eps\sum_{i=1}^n\bigg(\sum_{j=1}^n a_{ij}(1+\delta_{ij})
	+ 2\mu_i\bigg)\frac{z_i^2}{u_i+\eta}
	- 2\eta\eps^2\sum_{i=1}^n\frac{\mu_i}{\pi_i}z_i^2.
\end{align*}
Choosing
$$
  0<\eta\le\eta_0
	:=\min_{i=1,\ldots,n}a_{i0}\bigg(\sum_{j=1}^n a_{ij}(1+\delta_{ij})\bigg)^{-1},
$$
the first term on the left-hand side is nonnegative, and we obtain 
$$
  z^T H_\eps(u+\eta)A_\eps(u)z \ge \frac{\kappa}{4}\sum_{i=1}^n\frac{z_i^2}{u_i+\eta}
	- \eta\eps C_1\sum_{i=1}^n\frac{z_i^2}{u_i+\eta}
	- \eta\eps^2 C_2\sum_{i=1}^n z_i^2,
$$
where $C_1=2\max_{i=1,\ldots,n}(\sum_{j=1}^n a_{ij}+\mu_i)$ and
$C_2=2\max_{i=1,\ldots,n}(\mu_i/\pi_i)$. This finishes the proof.
\end{proof}

\begin{lemma}[Entropy inequality II]\label{lem.ei.eta}
Let $d=1$ and let $0<\eta\le\eta_0$ (see Lemma \ref{lem.HAetaeps}). Then there
exists $C>0$ independent of $\eta$ such that for $0\le s<t$,
$$
  \int_\Omega h(u(t)+\eta)dx + C\sum_{i=1}^n\int_s^t\int_\Omega
	|\na\sqrt{u_i+\eta}|^2 dxd\sigma \le \int_\Omega h(u(s)+\eta)dx.
$$
\end{lemma}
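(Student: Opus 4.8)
The plan is to derive the entropy inequality at the discrete level and pass to the limit $(\eps,\tau)\to 0$, exactly mirroring the proof of Lemma~\ref{lem.ei} but using the regularized relative-entropy test function. Concretely, I would use
$$
  \phi^k = \bigg(\pi_i\Big(1-\frac{1}{u_i^k+\eta}\Big)+\eps\log(u_i^k+\eta)\bigg)_{i=1,\ldots,n}
$$
as a test function in \eqref{3.eps}. The elliptic regularization term $\delta(\ldots)$ has already been removed in the $\delta\to 0$ limit, so I work from \eqref{3.uk} (equivalently, from \eqref{3.eps} for the solution after the $\delta$-limit); here the one-dimensional embedding $H^1(\Omega)\hookrightarrow L^\infty(\Omega)$ guarantees that $\phi^k$ is admissible since $u_i^k\in H^1(\Omega)$ and $u_i^k+\eta$ is bounded away from $0$. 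The first term, the discrete time derivative paired with $\phi^k$, is handled by convexity: writing $h_{\eps,\eta}(u):=\sum_i\pi_i(u_i+\eta-\log(u_i+\eta))+\eps\sum_i(u_i+\eta)(\log(u_i+\eta)-1)$, one has $\phi^k = h_{\eps,\eta}'(u^k)$ up to an additive constant vector (the constant pairs to zero with the difference $u^k-u^{k-1}$ once summed, or can simply be absorbed), and convexity of $h_{\eps,\eta}$ gives
$$
  \frac1\tau\int_\Omega(u^k-u^{k-1})\cdot\phi^k\,dx \ge \frac1\tau\int_\Omega\big(h_{\eps,\eta}(u^k)-h_{\eps,\eta}(u^{k-1})\big)dx.
$$
The diffusion term becomes $\int_\Omega \na u^k : H_\eps(u^k+\eta)A_\eps(u^k)\na u^k\,dx$, which is bounded below using Lemma~\ref{lem.HAetaeps}.

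The second step is to assemble the discrete inequality. Summing over $k$ from $j_s+1$ to $j_t$ (where $s\in((j_s-1)\tau,j_s\tau]$, $t\in((j_t-1)\tau,j_t\tau]$) and invoking Lemma~\ref{lem.HAetaeps} together with the gradient identity $\na u_i=2\sqrt{u_i+\eta}\,\na\sqrt{u_i+\eta}$ (so that $z_i^2/(u_i+\eta)$ with $z_i=\na u_i$ equals $4|\na\sqrt{u_i+\eta}|^2$), I obtain
$$
  \int_\Omega h_{\eps,\eta}(u^{(\tau)}(t))dx + (\kappa - \eta\eps C_1)\sum_{i=1}^n\int_s^t\!\!\int_\Omega|\na\sqrt{u_i^{(\tau)}+\eta}|^2dxd\sigma \le \int_\Omega h_{\eps,\eta}(u^{(\tau)}(s))dx + \eta\eps^2 C_2 R,
$$
where $R$ is a uniform bound on $\sum_i\int_0^T\int_\Omega|\na u_i^{(\tau)}|^2\,dx\,d\sigma$ scaled appropriately—but here one must be careful, since $u_i^{(\tau)}$ is only bounded in $L^2(0,T;H^1(\Omega))$ after multiplication by $\sqrt\eps$ (Lemma~\ref{lem.est1}). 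Thus $\eta\eps^2 C_2\sum_i\int\int|\na u_i^{(\tau)}|^2 \le \eta\eps C_2\cdot(\eps\sum_i\int\int|\na u_i^{(\tau)}|^2)\le \eta\eps C_2 C$, which $\to 0$ as $\eps\to 0$; and the $\eta\eps C_1$ correction to $\kappa$ also vanishes, so for $\eps$ small enough the coefficient $\kappa-\eta\eps C_1 \ge \kappa/2 =: C > 0$ is positive and $\eta$-independent.

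The third step is the limit $(\eps,\tau)\to 0$. For the left-hand entropy at time $t$, the pointwise a.e.\ convergence $u^{(\tau)}\to u$ in $Q_T$ (established in Step~4 of Section~\ref{sec.approx}) gives $h_{\eps,\eta}(u^{(\tau)}(t))\to h(u(t)+\eta)$ a.e.\ in $\Omega$ for a.e.\ $t$, and Fatou's lemma yields $\int_\Omega h(u(t)+\eta)dx \le \liminf\int_\Omega h_{\eps,\eta}(u^{(\tau)}(t))dx$; the $\eps h^0$-type contribution is $O(\eps)$ since $u_i+\eta$ is bounded in $L^1$ and $(u_i+\eta)\log(u_i+\eta)$ is controlled by the $L^2$-type bounds. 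Crucially, unlike in Lemma~\ref{lem.ei}, here the entropy at time $s>0$ also passes to the limit: $h(u^{(\tau)}(s)+\eta)\to h(u(s)+\eta)$ a.e., and since $\log(u_i^{(\tau)}(s)+\eta)$ is bounded between $\log\eta$ and $\log(\|u_i^{(\tau)}(s)\|_{L^\infty}+\eta)$—wait, that bound is not uniform; instead use $|h(v+\eta)|\le \sum_i\pi_i(v_i+\eta+|\log(v_i+\eta)|)$ and $|\log(v_i+\eta)|\le |\log\eta| + v_i/\eta$ pointwise, so $h(u^{(\tau)}(s)+\eta)$ is dominated by an affine function of $u^{(\tau)}(s)$ which converges in $L^1(\Omega)$ along a suitable sequence of times $s$; by generalized dominated convergence one gets $\int_\Omega h(u^{(\tau)}(s)+\eta)dx\to\int_\Omega h(u(s)+\eta)dx$ for a.e.\ $s$. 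Finally, weak lower semicontinuity of the $L^2(s,t;L^2(\Omega))$ norm handles the gradient term (using that $\sqrt{u_i^{(\tau)}+\eta}\rightharpoonup\sqrt{u_i+\eta}$ weakly in $L^2(s,t;H^1(\Omega))$, which follows from the $L^2$-gradient bound plus a.e.\ convergence), and a standard argument extends the inequality from a.e.\ $s,t$ to all $s<t$ using monotonicity of $t\mapsto\int_\Omega h(u(t)+\eta)dx$.

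The main obstacle I anticipate is the passage to the limit in the entropy evaluated at an arbitrary time, particularly establishing that the test function $\phi^k$ is genuinely admissible in the post-$\delta$-limit formulation \eqref{3.uk} — that formulation only allows $\phi\in H^2_\nu(\Omega;\R^n)$, whereas $\phi^k$ built from $u^k$ need not lie in $H^2$. The clean fix is to perform this computation \emph{before} taking $\delta\to 0$, i.e.\ in \eqref{3.eps} where test functions range over $H^m(\Omega;\R^n)$ and $\phi^k\in H^1(\Omega;\R^n)$ is legitimate in $d=1$ ($m=1$); the extra $\delta$-terms $\delta\int_\Omega(\sum_{|\alpha|=1}D^\alpha w^k\cdot D^\alpha\phi^k + w^k\cdot\phi^k)dx$ then need to be shown to have a sign or to vanish in the limit, which requires knowing $\phi^k = h_{\eps,\eta}'(u^k)$ is a monotone transformation of the entropy variable — and indeed $w^k=h_\eps'(u^k)$, so one must relate $h_{\eps,\eta}'(u^k)$ to $w^k$; they differ, so this coupling term does not obviously have the right sign and must instead be absorbed using the $\sqrt\delta$-bound $\sqrt\delta\|w^k\|_{H^m}\le C(\tau)$ from \eqref{3.uw} together with an $H^1$-bound on $\phi^k$ coming from the $L^2(0,T;H^1)$ control on $u_i^{(\tau)}$ — care is needed as this bound is only $O(\eps^{-1/2})$, so the coupling term is $O(\delta\eps^{-1/2})\to 0$ provided the $\delta$-limit is taken first along $\delta=o(\sqrt\eps)$, which is permissible. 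This bookkeeping between the three parameters $\delta,\eps,\tau,\eta$ is the delicate part; everything else is a routine adaptation of Lemma~\ref{lem.ei}.
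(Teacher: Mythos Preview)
Your overall strategy---test \eqref{3.eps} with $\phi^k=h_\eps'(u^k+\eta)$, invoke Lemma~\ref{lem.HAetaeps} for the diffusion term, use convexity for the discrete time derivative, sum in $k$, and pass to the limit using domination by an affine function of $u^{(\tau)}$ at time $s$---matches the paper's proof almost step by step.

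The one substantive divergence is your handling of the $\delta$-regularization term. You claim that $\na w^k\cdot\na\phi^k$ ``does not obviously have the right sign'' and propose to absorb it by taking $\delta=o(\sqrt\eps)$. In fact this term \emph{is} nonnegative: since $w_i^k$ and $\phi_i^k$ are both componentwise increasing functions of $u_i^k$,
\[
  \na w_i^k\cdot\na\phi_i^k
  = \Big(\frac{\pi_i}{(u_i^k)^2}+\frac{\eps}{u_i^k}\Big)
    \Big(\frac{\pi_i}{(u_i^k+\eta)^2}+\frac{\eps}{u_i^k+\eta}\Big)|\na u_i^k|^2 \ge 0.
\]
The paper uses this directly and keeps $\delta=\eps$ (as set up in Remark~\ref{rem} for $d=1$), so that only the zeroth-order piece $-\eps\int_\Omega w^k\cdot\phi^k\,dx$ survives. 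That piece is then split into the bounded part $\pi_i(1-1/(u_i^k+\eta))$ and the logarithmic part $\eps\log(u_i^k+\eta)$ of $\phi_i^k$; using $\eps\sum_k\tau\|w^k\|_{H^1}^2\le C$ from \eqref{3.deis} together with the one-dimensional embedding $H^1(\Omega)\hookrightarrow L^\infty(\Omega)$, the two contributions are bounded by $C(\eta,T)\sqrt\eps$ and $C(\eta,T)\eps^{3/2}$ respectively. Your workaround could be pushed through (the correct scaling is $\delta=o(\eps)$ rather than $o(\sqrt\eps)$, since $\sum_k\tau\|\na\phi^k\|_{L^2}^2\le C(\eta)\eps^{-1}$), but it forces a decoupling of $\delta$ from $\eps$ that the sign observation makes unnecessary.
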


\begin{proof}
We use $v^k=(v_1^k,\ldots,v_n^k)$ with
$$
  v_i^k = \frac{\pa h_\eps}{\pa u_i}(u^k+\eta)
	= \pi_i\bigg(1-\frac{1}{u_i^k+\eta}\bigg) + \eps\log(u_i^k+\eta)
$$
as a test function in the weak formulation of the approximate equations \eqref{3.eps}:
\begin{align*}
  \frac{1}{\tau}\int_\Omega & (u^k-u^{k-1})\cdot v^k dx
	+ \int_\Omega \na v^k:B_\eps(w^k)\na w^k dx \\
	&{}+ \eps\int_\Omega\bigg(\sum_{|\alpha|=1}D^\alpha w^k\cdot D^\alpha v^k
	+ w^k\cdot v^k\bigg)dx = 0.
\end{align*}
Note that we have chosen $\delta=\eps$; see Remark \ref{rem}.
The convexity of $h_\eps$ implies that
$$
  (u^k-u^{k-1})\cdot v^k = ((u^k+\eta)-(u^{k-1}+\eta))\cdot h'_\eps(u^k+\eta)
	\ge h_\eps(u^k+\eta) - h_\eps(u^{k-1}+\eta).
$$
Furthermore, by the definition of $v^k$,
$$
  \sum_{|\alpha|=1}D^\alpha w^k\cdot D^\alpha v^k
	= \na w^k\cdot\na v^k = \bigg(\frac{\pi_i}{(u_i^k)^2} + \frac{\eps}{u_i^k}\bigg)
	\bigg(\frac{\pi_i}{(u_i^k+\eta)^2}+\frac{\eps}{u_i^k+\eta}\bigg)|\na u_i^k|^2
	\ge 0.
$$
Note that $u_i^k>0$, so quotients of the type $\pi_i/(u_i^k)^2$ are well-defined.
It follows from $\na v^k = H_\eps(u^k+\eta)\na u^k$ and Lemma \ref{lem.HAetaeps}
that
\begin{align*}
  \na & v^k:B_\eps(w^k)\na w^k = \na u^k:H_\eps(u^k+\eta)A_\eps(u^k)\na u^k \\
	&\ge \frac{\kappa}{4}\sum_{i=1}^n\frac{|\na(u_i^k+\eta)|^2}{u_i^k+\eta}
	- \eta\eps C_1\sum_{i=1}^n\frac{|\na u_i^k|^2}{u_i^k+\eta}
	- \eta\eps^2 C_2\sum_{i=1}^n|\na u_i^k|^2.
\end{align*}
Summarizing, this gives
\begin{align*}
  \int_\Omega h_\eps(u^k+\eta)dx &+ \kappa\tau\sum_{i=1}^n|\na(u_i^k+\eta)^{1/2}|^2 dx
	\le \int_\Omega h_\eps(u^{k-1}+\eta)dx \\
	&{}-\eps\tau\int_\Omega w^k\cdot v^k dx
	+ \eta\eps\tau C_1\sum_{i=1}^n\int_\Omega\frac{|\na u_i^k|^2}{u_i^k+\eta}
	+ \eta\eps^2\tau C_2\sum_{i=1}^n|\na u_i^k|^2.
\end{align*}
We sum this inequality from $k=j,\ldots,\ell$ for $j<\ell$:
\begin{align}\label{4.aux2}
  \int_\Omega & h_\eps(u^\ell+\eta)dx + \kappa\sum_{i=1}^n\sum_{k=j}^\ell\tau\int_\Omega
	|\na(u_i^k+\eta)^{1/2}|^2 dx \\
	&\le \int_\Omega h_\eps(u^{j-1}+\eta)dx
	-\eps\sum_{k=j}^\ell\tau\int_\Omega w^k\cdot v^k dx \nonumber \\
	&\phantom{xx}{}+\eta\eps C_1\sum_{i=1}^n\sum_{k=j}^\ell\tau\int_\Omega
	\frac{|\na u_i^k|^2}{u_i^k+\eta}dx
	+ \eta\eps^2 C_2\sum_{i=1}^n\sum_{k=j}^\ell\tau\int_\Omega|\na u_i^k|^2 dx.
	\nonumber
\end{align}
We know from Lemma \ref{lem.est1} that
$$
  \sum_{k=0}^N\tau\|(u_i^k)^{1/2}\|_{H^1(\Omega)}^2
	+ \eps\sum_{k=0}^N\tau\|u_i^k\|_{H^1(\Omega)}^2 \le C.
$$
Since $|\na u_i^k|^2/(u_i^k+\eta)=4u_i^k|\na(u_i^k)^{1/2}|^2/(u_i^k+\eta)
\le 4|\na(u_i^k)^{1/2}|$,
the last two terms on the right-hand side of \eqref{4.aux2} are bounded
from above by $\eta\eps C$. Thus, it remains to estimate the first term on the
right-hand side of \eqref{4.aux2}. We write
\begin{align*}
  & -\eps\sum_{k=j}^\ell\tau\int_\Omega w^k\cdot v^k dx = I_1 + I_2,
	\quad\mbox{where} \\
	& I_1 := -\eps\sum_{i=1}^n\sum_{k=j}^\ell\tau\int_\Omega
	\pi_i\bigg(1-\frac{1}{u_i^k+\eta}\bigg)w_i^k dx, \\
	& I_2 := -\eps^2\sum_{i=1}^n\sum_{k=j}^\ell\tau\int_\Omega
	\log(u_i^k+\eta)w_i^k dx.
\end{align*}
Since estimate \eqref{3.deis} shows that
\begin{equation}\label{4.w}
  \eps\sum_{i=1}^n\sum_{k=1}^N\tau\|w_i^k\|_{H^1(\Omega)}^2 \le C,
\end{equation}
we obtain
$$
  I_1 \le \eps\sum_{i=1}^n\sum_{k=j}^\ell\tau\int_\Omega
	\pi_i\bigg(1+\frac{1}{\eta}\bigg)|w_i^k| dx \le C(\eta,T)\sqrt{\eps}.
$$
To estimate $I_2$, we first compute
\begin{align*}
  \int_\Omega & |\log(u_i^k+\eta)|dx \le \int_\Omega|\log(u_i^k+\eta)-\log\eta|dx
	+ \int_\Omega|\log\eta|dx \\
	&= \int_\Omega\bigg|u_i^k\int_0^1\frac{d\theta}{\theta u_i^k+\eta}\bigg|dx
	+ |\log\eta|\operatorname{meas}(\Omega)
	\le \frac{1}{\eta}\|u_i^k\|_{L^1(\Omega)} + |\log\eta|\operatorname{meas}(\Omega).
\end{align*}
By Lemma \ref{lem.est1}, $\|u_i^k\|_{L^1(\Omega)}$ is bounded uniformly in $k$
(and $(\eps,\tau)$). We conclude from \eqref{4.w} and the continuous
embedding $H^1(\Omega)\hookrightarrow L^\infty(\Omega)$ in one space dimension
that
\begin{align*}
  I_2 &\le \eps^2\sum_{i=1}^n\sum_{k=j}^\ell\tau\|\log(u_i^k+\eta)\|_{L^1(\Omega)}
	\|w_i^k\|_{L^\infty(\Omega)} \\
	&\le \eps^2 C\sum_{i=1}^n\max_{k=1,\ldots,N}
	\|\log(u_i^k+\eta)\|_{L^1(\Omega)}
	\sqrt{T}\sum_{k=1}^N\tau\|w_i^k\|_{H^1(\Omega)}^2 \le C(\eta,T)\eps^{3/2}.
\end{align*}

Summarizing these estimates, we infer from \eqref{4.aux2}, using the notation from
Section \ref{sec.approx}, that
\begin{align}\label{4.aux3}
  \int_\Omega h_\eps & (u^{(\tau)}(t)+\eta)dx + \kappa\sum_{i=1}^n\int_s^t\int_\Omega
	|\na (u_i^{(\tau)}+\eta)^{1/2}|^2 dx d\sigma \\
	&\le \int_\Omega h_\eps(u^{(\tau)}(s-\tau)+\eta)dx
	+ \eta\eps C + C(\eta,T)\sqrt{\eps}(1+\eps), \nonumber
\end{align}
where $s\in((j-1)\tau,j\tau]$, $t\in((\ell-1)\tau,\ell\tau]$.
Since 
$$
  |\na (u_i^{(\tau)}+\eta)^{1/2}| 
	= \bigg|\frac{\na u_i^{(\tau)}}{2(u_i^{(\tau)}+\eta)^{1/2}}\bigg|
  \le \bigg|\frac{\na u_i^{(\tau)}}{2(u_i^{(\tau)})^{1/2}}\bigg|
	= |\na (u_i^{(\tau)})^{1/2}|,
$$
it follows from estimate \eqref{3.deis} that 
$\sum_{i=1}^n\big\|\na (u_i^{(\tau)}+\eta)^{1/2}\big\|_{L^2(Q_T)} \le C$.
We have already proved that, up to a subsequence, $u_i^{(\tau)}\to u_i$ strongly
in $L^{3/2}(Q_T)$ as $(\eps,\tau)\to 0$. We infer that
$\na(u_i^{(\tau)}+\eta)^{1/2}\rightharpoonup\na (u_i+\eta)^{1/2}$
weakly in $L^2(Q_T)$.
Therefore, $u_i^{(\tau)}(t)\to u_i(t)$ strongly in
$L^{3/2}(\Omega)$ for a.e.\ $t\in(0,T)$ and
$$
  \int_\Omega\log(u_i^{(\tau)}(t)+\eta)dx \to \int_\Omega\log(u_i(t)+\eta)dx.
$$
By the weak lower semicontinuity of the norm,
$$
  \int_s^t\int_\Omega|\na\sqrt{u_i+\eta}|^2 dxd\sigma
	\le \liminf_{(\tau,\eps)\to 0}\int_s^t|\na(u_i^{(\tau)}+\eta)^{1/2}|^2 dx.
$$
The limit $(\tau,\eps)\to 0$ in \eqref{4.aux3} concludes the proof.
\end{proof}

Next, we introduce for $0<\eta\le\eta_0$ the relative entropy
\begin{align*}
  \H_\eta(u|\bar{u}) &= \int_\Omega\big(h(u+\eta)-h(\bar{u}+\eta)
	-h'(\bar{u}+\eta)\cdot((u+\eta)-(\bar{u}+\eta))\big)dx \\
	&= \sum_{i=1}^n\pi_i\int_\Omega\bigg(\frac{u_i+\eta}{\bar{u}_i+\eta}
	-\log\frac{u_i+\eta}{\bar{u}_i+\eta}-1\bigg)dx.
\end{align*}
Because of mass conservation, we have
$\int_\Omega((u_i+\eta)/(\bar{u}_i+\eta)-1)dx=0$, implying that
\begin{equation}\label{4.defHeta}
  \H_\eta(u|\bar{u}) = \sum_{i=1}^n\pi_i
	\int_\Omega\big(\log(\bar{u}_i+\eta)-\log (u_i+\eta)\big)dx.
\end{equation}
In view of Lemma \ref{lem.ei.eta}, we can formulate the relative entropy inequality as 
\begin{equation}\label{4.Heta}
  \H_\eta(u(t)|\bar{u}) + C\sum_{i=1}^n\int_s^t\int_\Omega|\na\sqrt{u_i+\eta}|^2dx
	\le \H_\eta(u(s)|\bar{u}), \quad 0<s<t.
\end{equation}
We claim that the relative entropy decays to zero as $t\rightarrow\infty$. To prove
this, we need some preparation.

\begin{lemma}\label{lem.poincare}
Let $g\in L^\infty(0,\infty;L^1(\Omega))$ with $g\ge 0$
and $\na\sqrt{g}\in L^2(0,\infty;L^2(\Omega))$ be such that
$\bar{g}:=\fint_\Omega g(x,t)dx$ is independent
of $t>0$ (i.e., $g$ conserves the mass).
Then there exists a constant $C>0$ independent of $g$ such that for $t>0$,
$$
  \|\sqrt{g(t)}-\sqrt{\bar{g}}\|_{L^2(\Omega)}
	\le C\|\na \sqrt{g(t)}\|_{L^2(\Omega)}.
$$
\end{lemma}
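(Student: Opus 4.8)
The plan is to reduce the claim to the ordinary Poincar\'e--Wirtinger inequality applied to $f:=\sqrt{g(t)}$, and then to control the discrepancy between the mean $\bar f:=\fint_\Omega f\,dx$ and $\sqrt{\bar g}$ by an elementary variance estimate. Fix a time $t>0$ for which $g(t)\in L^1(\Omega)$ and $\na\sqrt{g(t)}\in L^2(\Omega)$ (this holds for a.e.\ $t$); then $f\in H^1(\Omega)$ because $\|f\|_{L^2(\Omega)}^2=\|g(t)\|_{L^1(\Omega)}$ is finite. If $\bar g=0$ then $g(t)\equiv 0$ a.e.\ and the inequality is trivial, so we may assume $\bar g>0$. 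The Poincar\'e--Wirtinger inequality gives a constant $C_P>0$, depending only on $\Omega$, with
$$
  \|f-\bar f\|_{L^2(\Omega)}\le C_P\|\na f\|_{L^2(\Omega)}.
$$

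By the triangle inequality,
$$
  \|f-\sqrt{\bar g}\|_{L^2(\Omega)}\le\|f-\bar f\|_{L^2(\Omega)}
  +\operatorname{meas}(\Omega)^{1/2}\,|\bar f-\sqrt{\bar g}|,
$$
so it remains to estimate $|\bar f-\sqrt{\bar g}|$. The key point is the ``variance'' identity
$$
  \bar g-\bar f^{\,2}=\fint_\Omega f^2\,dx-\Big(\fint_\Omega f\,dx\Big)^2
  =\fint_\Omega(f-\bar f)^2\,dx=\operatorname{meas}(\Omega)^{-1}\|f-\bar f\|_{L^2(\Omega)}^2\ge0.
$$
In particular $\bar f^{\,2}\le\bar g$, and since $f\ge0$ we have $\bar f\ge0$, hence $\sqrt{\bar g}\ge\bar f\ge0$. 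Factoring,
$$
  0\le\sqrt{\bar g}-\bar f=\frac{\bar g-\bar f^{\,2}}{\sqrt{\bar g}+\bar f}
  \le\frac{\bar g-\bar f^{\,2}}{\sqrt{\bar g}}\le\sqrt{\bar g-\bar f^{\,2}},
$$
where the last inequality uses $\sqrt{\bar g}\ge\sqrt{\bar g-\bar f^{\,2}}$. Combining this with the variance identity and the Poincar\'e--Wirtinger inequality,
$$
  |\bar f-\sqrt{\bar g}|\le\sqrt{\bar g-\bar f^{\,2}}
  =\operatorname{meas}(\Omega)^{-1/2}\|f-\bar f\|_{L^2(\Omega)}
  \le\operatorname{meas}(\Omega)^{-1/2}C_P\|\na f\|_{L^2(\Omega)}.
$$

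Inserting this into the triangle inequality yields
$$
  \|\sqrt{g(t)}-\sqrt{\bar g}\|_{L^2(\Omega)}
  =\|f-\sqrt{\bar g}\|_{L^2(\Omega)}\le 2C_P\|\na f\|_{L^2(\Omega)}
  =2C_P\|\na\sqrt{g(t)}\|_{L^2(\Omega)},
$$
so that $C:=2C_P$ works and depends only on $\Omega$ (not on $g$, using that $\bar g$ is independent of $t$ by mass conservation). The argument is elementary and imposes no restriction on the space dimension; the only subtlety, and the whole reason the statement is not an immediate corollary of Poincar\'e--Wirtinger, is that one must compare $\sqrt{g(t)}$ with $\sqrt{\bar g}$ rather than with $\overline{\sqrt{g(t)}}$, and this mismatch is exactly what the estimate $|\bar f-\sqrt{\bar g}|\le\sqrt{\bar g-\bar f^{\,2}}$ repairs.
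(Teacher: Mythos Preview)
Your proof is correct and takes a genuinely different, much more elementary route than the paper. The paper argues by contradiction: assuming the inequality fails along a sequence of times $(t_n)$, it normalizes $v_n=(\sqrt{g(t_n)}-\sqrt{\bar g})/\|\sqrt{g(t_n)}-\sqrt{\bar g}\|_{L^2}$, uses compactness of $H^1(\Omega)\hookrightarrow L^2(\Omega)$ to extract a nonzero constant limit $v$, and then invokes Egorov's theorem together with mass conservation to reach a contradiction. Your argument is instead direct: apply Poincar\'e--Wirtinger to $f=\sqrt{g(t)}$ and close the gap between $\bar f$ and $\sqrt{\bar g}$ via the variance identity $\bar g-\bar f^{\,2}=\fint_\Omega(f-\bar f)^2\,dx$, which yields $|\sqrt{\bar g}-\bar f|\le\operatorname{meas}(\Omega)^{-1/2}\|f-\bar f\|_{L^2}$ in one line.

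What your approach buys: an explicit constant $C=2C_P$ depending only on $\Omega$ (so the ``independent of $g$'' claim is transparently true, whereas the paper's contradiction argument, as written, fixes $g$ and exploits that $\bar g$ is the same along the sequence $(t_n)$), no restriction on the dimension, and no need for compactness or Egorov. What the paper's approach buys: it is a reusable template for ``nonlinear Poincar\'e'' inequalities in situations where one cannot spot an algebraic identity linking the nonstandard center (here $\sqrt{\bar g}$) to the mean $\bar f$; the compactness--contradiction scheme then still goes through. For the present lemma, however, your variance trick is the cleaner route.
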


\begin{proof}
The proof is similar to that one in \cite[Lemma 7]{CGZ22} but some arguments are
different. We argue by contradiction.
Assume that there exists a sequence $(t_n)_{n\in\N}$ such that
\begin{equation}\label{4.contra}
  n\|\na \sqrt{g(t_n)}\|_{L^2(\Omega)}
	< \|\sqrt{g(t_n)}-\sqrt{\bar{g}}\|_{L^2(\Omega)}\quad\mbox{for all }n\in\N.
\end{equation}
This implies that $\|\sqrt{g(t_n)}-\sqrt{\bar{g}}\|_{L^2(\Omega)}>0$ and we can
define
$$
  v_n := \frac{\sqrt{g(t_n)}-\sqrt{\bar{g}}}{\|\sqrt{g(t_n)}-\sqrt{\bar{g}}
	\|_{L^2(\Omega)}}, \quad n\in\N.
$$
It follows from \eqref{4.contra} that
$$
  \|\na v_n\|_{L^2(\Omega)} = \frac{\|\na \sqrt{g(t_n)}\|_{L^2(\Omega)}}{
	\|\sqrt{g(t_n)}-\sqrt{\bar{g}}\|_{L^2(\Omega)}} < \frac{1}{n},
$$
such that $\na v_n\to 0$ strongly in $L^2(\Omega)$ as $n\to\infty$. By definition,
$\|v_n\|_{L^2(\Omega)}=1$ for all $n\in\N$, i.e., $(v_n)$ is bounded in $H^1(\Omega)$.
Taking into account the compact embedding $H^1(\Omega)\hookrightarrow L^2(\Omega)$,
there exists a subsequence, which is not relabeled, such that
$v_n\to v$ strongly in $L^2(\Omega)$ and $v_n\rightharpoonup v$ weakly in
$H^1(\Omega)$ as $n\to\infty$.
We deduce from $\na v_n\to 0$ strongly in $L^2(\Omega)$ that
$v$ is a constant and, because of $\|v_n\|_{L^2(\Omega)}=1$, we have $v\neq 0$.

Now, we show that $\|\sqrt{g(t_n)}-\sqrt{\bar{g}}\|_{L^2(\Omega)}\to 0$
as $n\to\infty$. Otherwise, by contradiction, there exists a subsequence of
$(g(t_n))_{n\in\N}$ (not relabeled) and $c>0$ such that
$\|\sqrt{g(t_n)}-\sqrt{\bar{g}}\|_{L^2(\Omega)}\ge c$ for every $n\in\N$.
Because of
\begin{equation}\label{4.convv}
  \frac{\sqrt{g(t_n)}-\sqrt{\bar{g}}}{\|\sqrt{g(t_n)}-\sqrt{\bar{g}}
	\|_{L^2(\Omega)}} \to v\quad\mbox{strongly in }L^2(\Omega)\mbox{ and a.e. in }\Omega,
\end{equation}
Egorov's theorem \cite[Theorem 4.29]{Bre11} shows that, for any $\eps>0$,
there exists $\Omega_\eps\subset\Omega$ such that $\operatorname{meas}(\Omega
\setminus\Omega_\eps)<\eps$ and
$$
  \frac{\sqrt{g(t_n)}-\sqrt{\bar{g}}}{\|\sqrt{g(t_n)}-\sqrt{\bar{g}}
	\|_{L^2(\Omega)}}\to v\quad\mbox{strongly in }L^\infty(\Omega_\eps).
$$
Since $v$ is a nonzero constant, there exist $c>0$ and $N_\eps\in\N$ such that for all
$n>N_\eps$,
\begin{align*}
  \sqrt{g(t_n)} \ge \sqrt{\bar{g}} + \frac{c}{2}v
	&\quad\mbox{a.e. in }\Omega_\eps\mbox{ if }v>0, \\
  \sqrt{g(t_n)} \le \sqrt{\bar{g}_i} - \frac{c}{2}(-v)
	&\quad\mbox{a.e. in }\Omega_\eps\mbox{ if }v<0.
\end{align*}
Thus, there exist $K_1$, $K_2>0$ independent of $\eps$ such that in $\Omega_\eps$,
$g(t_n)\ge \bar{g}+K_1$ if $v>0$ and $g(t_n)\le \bar{g}-K_2$ if $v<0$.
As the integral is absolutely continuous and $\eps>0$ is arbitrary, this
contradicts the constraint $\fint g(t_n)dx=\bar{g}$. We infer that
$$
  \|\sqrt{g(t_n)}-\sqrt{\bar{g}}\|_{L^2(\Omega)}\to 0\quad\mbox{as }n\to\infty
$$
and consequently,
$$
  \frac{g(t_n)-\bar{g}}{\sqrt{g(t_n)}-\sqrt{\bar{g}}}
	= \sqrt{g(t_n)}+\sqrt{\bar{g}} \to 2\sqrt{\bar{g}}
	\quad\mbox{strongly in }L^2(\Omega).
$$
Then the previous result and convergence \eqref{4.convv} imply that
$$
  \frac{g(t_n)-\bar{g}}{\|\sqrt{g(t_n)}-\sqrt{\bar{g}}\|_{L^2(\Omega)}}
	= \frac{g(t_n)-\bar{g}}{\sqrt{g(t_n)}-\sqrt{\bar{g}}}
	\frac{\sqrt{g(t_n)}-\sqrt{\bar{g}}}{\|\sqrt{g(t_n)}-\sqrt{\bar{g}}\|_{L^2(\Omega)}}
	\to 2\sqrt{\bar{g}}v
$$
strongly in $L^1(\Omega)$. However, this gives
$$
  \int_\Omega\frac{g(t_n)-\bar{g}}{\|\sqrt{g(t_n)}-\sqrt{\bar{g}}\|_{L^2(\Omega)}}
	dx \to \int_\Omega 2\sqrt{\bar{g}}v dx\neq 0,
$$
which violates the conservation of mass and ends the proof.
\end{proof}

The previous lemma and the entropy inequality in Lemma \ref{lem.ei} imply that
$$
  \int_0^\infty\|\sqrt{u_i(t)}-\sqrt{\bar{u}_i}\|_{L^2(\Omega)}^2 dt
	\le C\int_0^\infty\|\na \sqrt{u_i(t)}\|_{L^2(\Omega)}^2 dt \le C(u^0).
$$
Consequently, there exists a sequence $(t_k)\subset[0,\infty)$ satisfying
$t_k\to\infty$ as $k\to\infty$ such that 
$$
  \lim_{k\to\infty}\|\sqrt{u_i(t_k)}-\sqrt{\bar{u}_i}\|_{L^2(\Omega)} = 0.
$$
This shows that
\begin{align*}
  \lim_{k\to\infty}\|u_i(t_k)-\bar{u}_i\|_{L^1(\Omega)}
	&\le \lim_{k\to\infty}\|\sqrt{u_i(t_k)}+\sqrt{\bar{u}_i}\|_{L^2(\Omega)}
	\|\sqrt{u_i(t_k)}-\sqrt{\bar{u}_i}\|_{L^2(\Omega)} \\
	&\le C(u^0)\lim_{k\to\infty}\|\sqrt{u_i(t_k)}-\sqrt{\bar{u}_i}\|_{L^2(\Omega)} = 0.
\end{align*}
In particular, we obtain, for any fixed $\eta>0$,
$$
  \lim_{k\to\infty}\int_\Omega\log(u_i(t_k)+\eta)dx
	= \int_\Omega\log(\bar{u}_i+\eta)dx,
$$
and in view of definition \eqref{4.defHeta} of the relative entropy, this implies that
$$
  \lim_{k\to\infty}\H_\eta(u(t_k)|\bar{u}) = 0.
$$
Since $t\mapsto\H_\eta(u(t)|\bar{u})$ is bounded and nonincreasing by \eqref{4.Heta},
the convergence holds for all sequences $t\to\infty$:
$$
  \lim_{t\to\infty}\H_\eta(u(t)|\bar{u}) = 0.
$$
Finally, by the Csisz\'ar--Kullback inequality (see Proposition \ref{lem.ck} in the
appendix),
$$
  \lim_{t\to\infty}\|u_i(t)-\bar{u}_i\|_{L^1(\Omega)}
	\le C\|\bar{u}_i+\eta\|_{L^2(\Omega)}
  \lim_{t\to\infty}\H_\eta(u(t)|\bar{u})^{1/2} = 0
$$
for all $0<\eta\le\eta_0$, which ends the proof.

%%%%%%%%%%%%%%%%%%%%%%%%%%%%%%%%%%%%%%%%%%%%%%%%%%%%%%%%%%%%%%%%%%%%%%%%%%%%%%%

\begin{appendix}
\section{Auxiliary results}\label{sec.app}

\begin{lemma}\label{lem.equi}
Let $n=3$, $a_{13}=a_{21}=a_{32}=1$, and $a_{12}=a_{23}=a_{31}=0$.
Then there exist $\pi_1,\pi_2,\pi_3>0$ satisfying $\kappa>0$
(see \eqref{1.aii}) if and only if $a_{11}a_{22}a_{33}>8^{-3}$.
\end{lemma}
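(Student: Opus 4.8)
The plan is to make the condition $\kappa>0$ completely explicit for this special choice of coefficients and then read off the answer from the resulting system of linear inequalities in $\pi_1,\pi_2,\pi_3$.

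First I would substitute $a_{13}=a_{21}=a_{32}=1$ and $a_{12}=a_{23}=a_{31}=0$ into the quantity $8\pi_i a_{ii}-\sum_{j\neq i}\pi_j a_{ji}$ appearing in \eqref{1.aii}. Since in the $i$-th expression only one off-diagonal term survives ($\pi_2$ for $i=1$, $\pi_3$ for $i=2$, $\pi_1$ for $i=3$), the existence of positive $\pi_i$ with $\kappa>0$ is equivalent to the solvability of the cyclic system
\[
  8\pi_1 a_{11}>\pi_2,\qquad 8\pi_2 a_{22}>\pi_3,\qquad 8\pi_3 a_{33}>\pi_1
\]
in positive unknowns.

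For necessity I would simply multiply the three inequalities (legitimate since all sides are positive), getting $8^3 a_{11}a_{22}a_{33}>1$; note this also forces $a_{11},a_{22},a_{33}>0$. For sufficiency, assuming $a_{11}a_{22}a_{33}>8^{-3}$, I would build the weights by hand: normalize $\pi_1=1$, so the constraints read $\pi_2<8a_{11}$ together with $1/(8a_{33})<\pi_3<8a_{22}\pi_2$. The latter pair forces $\pi_2>1/(64a_{22}a_{33})$, and the interval $\big(1/(64a_{22}a_{33}),\,8a_{11}\big)$ from which $\pi_2$ must be chosen is nonempty precisely because $8a_{11}\cdot 64 a_{22}a_{33}=8^3 a_{11}a_{22}a_{33}>1$. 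Picking $\pi_2$ there, then $\pi_3\in\big(1/(8a_{33}),\,8a_{22}\pi_2\big)$, and keeping $\pi_1=1$, one obtains positive weights satisfying all three inequalities, hence $\kappa>0$.

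The computation is elementary; the only care needed is to track the direction of each inequality around the cycle and to verify that the nested intervals from which one selects $\pi_2$ and $\pi_3$ are genuinely nonempty — and that nonemptiness is exactly the point where the hypothesis $a_{11}a_{22}a_{33}>8^{-3}$ enters. I do not expect any real obstacle beyond this bookkeeping.
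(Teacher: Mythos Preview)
Your proposal is correct and follows essentially the same approach as the paper: both reduce $\kappa>0$ to the cyclic system $8\pi_1 a_{11}>\pi_2$, $8\pi_2 a_{22}>\pi_3$, $8\pi_3 a_{33}>\pi_1$, obtain necessity by multiplying the three inequalities, and for sufficiency normalize $\pi_1=1$ and select $\pi_2,\pi_3$ from the resulting intervals. The only cosmetic difference is that the paper writes down explicit midpoints $\pi_2=\tfrac12(8a_{11}+1/(64a_{22}a_{33}))$ and $\pi_3=\tfrac12(8\pi_2 a_{22}+1/(8a_{33}))$ rather than arguing abstractly that the intervals are nonempty.
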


\begin{proof}
The condition $\kappa>0$ is equivalent to
$8\pi_1a_{11}>\pi_2$, $8\pi_2a_{22}>\pi_3$, and
$8\pi_3a_{33}>\pi_1$. Multiplying these inequalities immediately gives
$8^3a_{11}a_{22}a_{33}>1$. On the other hand, if this inequality is satisfied,
we set
$$
  \pi_1 = 1, \quad \pi_2 = \frac12\bigg(8a_{11}+\frac{1}{8^2a_{22}a_{33}}\bigg),
	\quad \pi_3 = \frac12\bigg(8\pi_2 a_{22}+\frac{1}{8a_{33}}\bigg).
$$
Then $8\pi_1a_{11}>\pi_2$ is equivalent to $8^3a_{11}a_{22}a_{33}>1$, and
both $8\pi_2a_{22}>\pi_3$ and $8\pi_3a_{33}>\pi_1$ are equivalent to
$8^2\pi_2a_{22}a_{33}>1$, which, by definition of $\pi_2$, is equivalent to
$8^3a_{11}a_{22}a_{33}>1$ again.
\end{proof}

The following result is proved in \cite[Section 4.3, page 71, example (c)]{CJMTU01}.

\begin{proposition}[Csisz\'ar--Kullback inequality]\label{lem.ck}
Let $\Omega\subset\R^d$ be a domain and $u\in L^1(\Omega)$.
We set $\bar{u}=\fint_\Omega udx$ and
$\H(u|\bar{u}) = \int_\Omega(\log(\bar{u}+\eta)-\log(u+\eta))dx$. Then
$$
  \|u-\bar{u}\|_{L^1(\Omega)} \le \sqrt{8}\|\bar{u}\|_{L^2(\Omega)}
	\H(u|\bar{u})^{1/2}.
$$
\end{proposition}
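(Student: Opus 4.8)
The plan is to run the standard Csisz\'ar--Kullback--Pinsker argument adapted to the logarithmic $\phi$-entropy. Set $v=u+\eta$ and $\bar v=\bar u+\eta=\fint_\Omega v\,dx$, so $v>0$ and $\int_\Omega(v-\bar v)\,dx=0$. With $\phi(s):=s-1-\log s\ge 0$ for $s>0$ one has $\log\bar v-\log v=-\log(v/\bar v)=\phi(v/\bar v)-(v/\bar v-1)$, and since the last term has vanishing integral over $\Omega$,
$$
  \H(u|\bar u)=\int_\Omega\big(\log(\bar u+\eta)-\log(u+\eta)\big)\,dx
  =\int_\Omega\phi\!\left(\frac{v}{\bar v}\right)dx\ \ge\ 0 .
$$

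First I would establish the pointwise bound $\phi(s)\ge(\sqrt s-1)^2$ for all $s>0$. Passing to the square-root variable, set $g(t):=\phi(t^2)=t^2-1-2\log t$; then $g(1)=g'(1)=0$ and $g''(t)=2+2t^{-2}\ge 2$ on $(0,\infty)$, so Taylor's formula with integral remainder gives $g(t)\ge(t-1)^2$, i.e.\ $\phi(s)\ge(\sqrt s-1)^2$. Hence
$$
  \H(u|\bar u)\ \ge\ \int_\Omega\left(\sqrt{\tfrac{v}{\bar v}}-1\right)^{2}dx
  \ =\ \frac{1}{\bar v}\int_\Omega\big(\sqrt v-\sqrt{\bar v}\big)^{2}dx .
$$

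Next I would factor $|v-\bar v|=\big|\sqrt v-\sqrt{\bar v}\big|\,\big(\sqrt v+\sqrt{\bar v}\big)$ and apply the Cauchy--Schwarz inequality, controlling the second factor by mass conservation: from $(\sqrt v+\sqrt{\bar v})^2\le 2(v+\bar v)$ and $\int_\Omega(v+\bar v)\,dx=2\bar v\operatorname{meas}(\Omega)$ we get $\big\|\sqrt v+\sqrt{\bar v}\big\|_{L^2(\Omega)}^2\le 4\bar v\operatorname{meas}(\Omega)$. Combining this with the previous display yields
$$
  \|u-\bar u\|_{L^1(\Omega)}=\int_\Omega|v-\bar v|\,dx
  \le\big(\bar v\,\H(u|\bar u)\big)^{1/2}\big(4\bar v\operatorname{meas}(\Omega)\big)^{1/2}
  =2(\bar u+\eta)\operatorname{meas}(\Omega)^{1/2}\,\H(u|\bar u)^{1/2},
$$
and since $(\bar u+\eta)\operatorname{meas}(\Omega)^{1/2}=\|\bar u+\eta\|_{L^2(\Omega)}$ and $2\le\sqrt 8$, the asserted inequality follows (in fact with the sharper constant $2$ and with $\bar u$ replaced by $\bar u+\eta$, which is the form actually invoked in Section \ref{sec.time}).

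I do not expect a serious obstacle here; the only delicate point is the pointwise inequality $\phi(s)\ge(\sqrt s-1)^2$, and the reason it works is precisely that one first changes to the variable $t=\sqrt s$: there $g''=2+2t^{-2}$ is bounded below by the positive constant $2$ on all of $(0,\infty)$, whereas $\phi''(s)=s^{-2}$ degenerates as $s\to\infty$, so a direct comparison $\phi(s)\gtrsim(s-1)^2$ is false and the square-root reduction is essential.
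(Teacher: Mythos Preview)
Your argument is correct and self-contained. The paper does not actually supply a proof of this proposition; it merely cites \cite[Section 4.3, page 71, example (c)]{CJMTU01}. Your route via the pointwise inequality $\phi(s)\ge(\sqrt s-1)^2$ (obtained through the change of variable $t=\sqrt s$), followed by the factorization $|v-\bar v|=|\sqrt v-\sqrt{\bar v}|(\sqrt v+\sqrt{\bar v})$ and Cauchy--Schwarz, is a clean direct proof that even yields the sharper constant $2$ in place of $\sqrt 8$.

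You are also right to flag the discrepancy in the statement: your proof produces $\|\bar u+\eta\|_{L^2(\Omega)}$ rather than $\|\bar u\|_{L^2(\Omega)}$, and this is indeed the form that is invoked at the end of Section~\ref{sec.time}. The printed statement with $\|\bar u\|_{L^2(\Omega)}$ appears to be a typo in the paper.
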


\end{appendix}

%%%%%%%%%%%%%%%%%%%%%%%%%%%%%%%%%%%%%%%%%%%%%%%%%%%%%%%%%%%%%%%%%%%%%%%%%%%%%%%

\section*{Data availability statement}

Data sharing not applicable to this article as no datasets were generated or 
analyzed during the current study.

%%%%%%%%%%%%%%%%%%%%%%%%%%%%%%%%%%%%%%%%%%%%%%%%%%%%%%%%%%%%%%%%%%%%%%%%%%%%%%%

\section*{Conflict of interest statement}

The authors have no competing interests to declare that are relevant to the 
content of this article.

%%%%%%%%%%%%%%%%%%%%%%%%%%%%%%%%%%%%%%%%%%%%%%%%%%%%%%%%%%%%%%%%%%%%%%%%%%%%%%%

\end{document}